\documentclass[a4paper,UKenglish,cleveref, autoref, thm-restate]{lipics-v2021}
\pdfoutput=1

\usepackage{float} 
\usepackage{mathtools} 
\usepackage{ytableau}  
\usepackage{tikz}     
\usetikzlibrary{positioning, arrows}

\usepackage{cite}

\usepackage{todonotes}

\newcommand{\TC}{\mathrm{TC}}

\newcommand{\Bc}{\mathcal{B}}
\newcommand{\Cc}{\mathcal{C}}

\newcommand{\oeis}[1]{\href{http://oeis.org/#1}{#1}}

\newcommand{\E}{\mathbb{E}}

\title{A Combinatorial Framework for the Pons--Batle Identity: Young Tableaux, Lattice Paths, and Limit Laws}

\titlerunning{The Pons--Batle Identity: Young Tableaux, Lattice Paths, and Limit Laws}

\author{Hexuan Liu}{Department of Pure and Applied Mathematics, Waseda University, Japan  \and \url{https://researchmap.jp/HexuanLiu?lang=en} }{lhx@ruri.waseda.jp}{https://orcid.org/0009-0001-0388-6041}{Supported by JSPS KAKENHI Grant Number JP25KJ2168 and JST SPRING Grant Number JPMJSP2128}

\author{Michael Wallner}{Institute of Discrete Mathematics, TU Graz, Austria \and Institute of Discrete Mathematics and Geometry, TU Wien, Austria \and \url{https://dmg.tuwien.ac.at/mwallner/} }{michael.wallner@tuwien.ac.at}{https://orcid.org/0000-0001-8581-449X}{Partially supported by the Austrian Science Fund (FWF) P~34142 and AST1535024.}

\author{Guan-Ru Yu}{Department of Applied Mathematics, National Sun Yat-sen University, Taiwan  \and \url{https://sites.google.com/site/guanruyu0127/} }{gryu@math.nsysu.edu.tw}{https://orcid.org/0000-0003-4255-6974}{NSTC-113-2115-M-110-004-MY3 (National Science and Technology Council, Taiwan)}

\authorrunning{H.\ Liu, M.\ Wallner, and G.-R.\ Yu}

\Copyright{Hexuan Liu, Michael Wallner, and Guan-Ru Yu}

\ccsdesc{Mathematics of computing~Combinatorics}
\ccsdesc{Mathematics of computing~Graph theory}
\ccsdesc{Mathematics of computing~Generating functions}
\ccsdesc{Mathematics of computing~Enumeration}
\ccsdesc{Applied computing~Bioinformatics}

\keywords{Recurrence relations, generating functions, analytic combinatorics, Young tableaux with walls, constrained words, bijections, exact enumeration}

\category{Extended Abstract}

\relatedversion{}

\acknowledgements{We thank Mireille Bousquet-Mélou and Michael Fuchs for stimulating discussions regarding this work.}

\nolinenumbers

\EventEditors{Konstantinos Panagiotou}
\EventNoEds{1}
\EventLongTitle{37th International Conference on Probabilistic, Combinatorial and Asymptotic Methods for the Analysis of Algorithms (AofA 2026)}
\EventShortTitle{AofA 2026}
\EventAcronym{AofA}
\EventYear{2026}
\EventDate{June 22--26, 2026}
\EventLocation{Munich, Germany}
\EventLogo{}
\SeriesVolume{381}
\ArticleNo{26}

\begin{document}

\maketitle

\begin{abstract}
Tree-child networks are an important class of phylogenetic network used to model reticulate evolutionary processes. These networks have attracted increasing attention from researchers with interests in both combinatorics and algorithms. A fundamental open problem posed by Pons and Batle asks whether the number $\TC_{n,k}$ of bicombining tree-child networks with $n$ leaves and $k$ reticulation nodes equals the number of certain constrained words, now called Pons--Batle words. In this paper, we confirm the conjecture for tree-child networks with a bounded number of reticulation nodes.

Our approach is combinatorial and analytic. We introduce families of Young tableaux with walls and holes and construct explicit bijections with Pons--Batle words, yielding a direct combinatorial explanation of the identities. These tableaux encode structural features of the underlying networks, including the placement of reticulation nodes. By projecting them to decorated Dyck paths, we obtain algebraic generating functions with differential operators encoding step weights, leading to explicit recurrence relations and closed-form formulas for $\TC_{n,k}$.

Beyond finite verification for moderate $k$, the framework reveals an underlying probabilistic structure. For $k=1$, natural structural parameters, such as the position and value of distinguished cells, converge, after rescaling, to $\mathrm{Beta}(2,1)$, $\mathrm{Beta}(1,2)$, and Uniform (i.e., $\mathrm{Beta}(1,1)$) distributions. These limit laws arise from a coalescence of singularities at the dominant square-root singularity, producing a non-analytic transition in the local expansion.

Overall, our results provide both combinatorial insight and a unified analytic perspective on the asymptotic behavior of tree-child networks, showing how algebraic generating functions with interacting singularities systematically produce Beta limit laws.

\end{abstract}

\clearpage

\section{Reformulating the Pons--Batle Identity in terms of Young tableaux}

\subsection{History and Motivation}
In 2012, Alois P. Heinz introduced the sequence \oeis{A213863} in the OEIS\footnote{The On-Line Encyclopedia of Integer Sequences (OEIS): \url{https://oeis.org/}}, arising from the following combinatorial construction.

\begin{definition}\label{an}
Let $\mathcal{A}_n$ denote the class of words over the alphabet $\{\omega_1, \dots, \omega_n\}$ such that each letter occurs exactly three times and, in every prefix of a word, either $\omega_i$ has not yet appeared or its number of occurrences is at least as large as that of $\omega_j$ for all $j>i$. Let $a_n = |\mathcal{A}_n|$.
\end{definition}

For example,
$
\mathcal{A}_2 = \{aaabbb, aababb, abaabb, baaabb, aabbab, ababab, baabab\},
$
and the first few values are
$
\{a_n\}_{n\ge1} = \{1, 7, 106, 2575, 87595, 3864040, 210455470, \dots\}.
$

In the second half of 2020, Fuchs, Yu, and Zhang~\cite{fuchs2021asymptotic} discovered that $a_{n-1}$ also counts tree-child networks with $n$ leaves and $n-1$ reticulation nodes, once leaf labels are ignored. Here, a \emph{tree-child network} is a rooted directed acyclic graph whose internal nodes are either tree nodes (indegree~$1$, outdegree~$2$) or reticulation nodes (indegree~$2$, outdegree~$1$), such that every non-leaf node has at least one child that is not a reticulation node. More precisely, they established the following result.

\begin{proposition}
There is a bijection between the set $\mathcal{TC}_{n,n-1}$ of tree-child networks with $n$ leaves and $n-1$ reticulation nodes (with labels removed) and the class $\mathcal{A}_{n-1}$. Consequently,
\[
a_{n-1} = \frac{TC_{n,n-1}}{n!},
\]
where $TC_{n,n-1} = |\mathcal{TC}_{n,n-1}|$.
\end{proposition}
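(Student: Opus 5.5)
The idea is to decompose a maximally reticulated tree-child network into a canonical sequence of events and encode that sequence as a word in $\mathcal{A}_{n-1}$. We work with \emph{leaf-labelled} networks and use labels to rigidify the structure; only at the end do we quotient by the $n!$ leaf labellings. Since the counting convention says that $TC_{n,n-1}$ counts labelled networks, the claimed identity $a_{n-1}=TC_{n,n-1}/n!$ follows if we can show two things: the symmetric group $S_n$ acts freely on the labels, and the unlabelled networks are in bijection with $\mathcal{A}_{n-1}$.

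\textbf{Step 1: structure of networks with $k=n-1$.} In a tree-child network with $n$ leaves the number $k$ of reticulations satisfies $k\le n-1$. In the extremal case $k=n-1$ we have $n-1$ reticulations and $2(n-1)+n-1=3(n-1)$ tree-node edge-ends feeding them. Counting degrees shows that every tree node has exactly one reticulation child and one tree/leaf child, except along a single root path.

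Consequently, if we delete one incoming edge of each reticulation (a canonical choice, say the one belonging to the tree-child path), we obtain a spanning tree. That tree consists of $n$ disjoint tree paths, each ending in a leaf, with reticulations grafted onto them. Each of the $n-1$ non-root components is then attached by three edge-ends: its two parent edges into the reticulation, and its connection along the path. These three attachment points are what produce the triple occurrences of each letter in the word.

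\textbf{Step 2: the encoding.} We order the $n$ leaves canonically, for example by when their tree path branches off the root path in a depth-first traversal. Component $i$ receives the letter $\omega_i$ for $i=1,\dots,n-1$, and the last component is the root spine. We then traverse the network from the root, recording at each tree node the letter of the component whose reticulation edge starts there; this yields a word with every letter occurring three times.

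The prefix condition in Definition~\ref{an} should be exactly the acyclicity/tree-child constraint. It says that once component $i$ has been opened, the attachments of later components $j>i$ cannot outrun those of $i$. We verify this by induction: inserting a new component into the network corresponds to inserting three copies of a new largest letter, and this insertion is admissible precisely when the prefix condition holds.

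\textbf{Step 3: inverse map and labels.} Given a word, we rebuild the network by reading it left to right, creating tree nodes and reticulations. Checking that the result is tree-child with $n$ leaves and $n-1$ reticulations, and that the two maps are mutually inverse, gives the bijection with unlabelled networks.

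Finally, a tree-child network has trivial automorphism group once leaves are labelled: each node has a tree path to a leaf, so it is determined by the leaf it reaches. Hence the $n!$ labellings of each unlabelled network are pairwise distinct, and $TC_{n,n-1}=n!\,a_{n-1}$.

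The main obstacle is Step 2, namely showing that the prefix constraint is exactly equivalent to the network being acyclic and tree-child under the chosen canonical ordering. If a direct argument fails, we would fall back on a recurrence. We would prove that both $TC_{n,n-1}/n!$ and $a_{n-1}$ satisfy the same decomposition, obtained by removing the component with the largest letter, with matching insertion counts, and conclude by induction.
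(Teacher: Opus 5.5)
\textbf{Gaps in the encoding.} The paper gives no proof of this statement (it is quoted from \cite{fuchs2021asymptotic}), so your argument has to stand on its own. Its core, Step~2, does not work as written.

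With a root of out-degree one there are $2(n-1)$ reticulation in-edges and $2(n-1)$ tree nodes, each with at most one reticulation child. So \emph{every} tree node has exactly one reticulation child; there is no exceptional root path, and the components are $n$ paths. Recording at each tree node the letter of the reticulation it feeds therefore gives a word of length $2(n-1)$ in which each letter occurs twice, not three times. The missing third occurrence is the reticulation $r_i$ itself: the triples consisting of $r_i$ and its two parents partition the $3(n-1)$ internal nodes.

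More seriously, your labelling rule clashes with $\mathcal{A}_{n-1}$. There first occurrences are unconstrained (e.g.\ $baaabb\in\mathcal{A}_2$). The prefix condition says precisely that second occurrences, and likewise third occurrences, appear in increasing letter order. If the root path has four tree nodes feeding $r,r',r',r$, labelling by first encounter gives a word starting $abba$, in which the second $b$ precedes the second $a$. In addition, ``traverse from the root'' fixes no linear order on a DAG, and the claimed equivalence between the prefix condition and acyclicity is only asserted.

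A rule that works is this. Process whole paths, starting with the root path. When a path reaches its leaf, continue with the unprocessed reticulation whose second parent was visited earliest. Write the target's letter at each tree node and the reticulation's own letter when its path is entered, and number the letters in processing order. Second and third occurrences are then increasing by construction. Conversely, cutting a word of $\mathcal{A}_{n-1}$ at its third occurrences recovers the paths, and acyclicity is automatic because both parents of $r_i$ precede its third occurrence; this reproduces $a_2=7$.

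Your fallback recurrence is also not set up. The number of admissible reinsertions of the largest letter depends on where the second occurrence of $\omega_{n-2}$ sits, so an extra statistic must be carried.

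\textbf{The passage to labelled networks is a non sequitur.} That an automorphism fixing all leaf labels is trivial holds for every tree-child network, including $k=0$. It does not imply that the $n!$ labellings are distinct: for $n=3$, $k=0$ there is one unlabelled network but $TC_{3,0}=3\neq 3!$. What you need is that the \emph{unlabelled} maximally reticulated network is rigid. This follows from the structure above: each tree node has one reticulation child and one non-reticulation child, which any automorphism must respect, so an automorphism fixing the root fixes every node by induction.
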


In mid-2021, Pons and Batle~\cite{BatlePons2021Phylo} introduced a more general family of words.

\begin{definition}[Pons--Batle words]\label{def:PonsBatleWords}
Let $\mathcal{C}_{n,k}$ denote the class of words over the alphabet $\{\omega_1, \dots, \omega_n\}$ such that the letters $\omega_1,\dots,\omega_k$ occur exactly three times, while the letters $\omega_{k+1},\dots,\omega_n$ occur exactly twice. Moreover, in every prefix of a word, either $\omega_i$ has not yet appeared or its number of occurrences is at least as large as that of $\omega_j$ for all $j>i$. Let $c_{n,k} = |\mathcal{C}_{n,k}|$.
\end{definition}

For example, we have
$
\mathcal{C}_{2,1} = \{aabba, ababa, baaba, aaabb, aabab, abaab, baaab\}.
$
By definition, $\mathcal{C}_{n,n} = \mathcal{A}_n$, and hence $a_n = c_{n,n}$. Pons and Batle formulated a striking conjecture relating these words to phylogenetic networks, which was recently proven by Lin et al.~\cite{linetall2026ProofPonsBatle}.

\begin{theorem}[Pons--Batle Identity, \cite{BatlePons2021Phylo,linetall2026ProofPonsBatle}]\label{thm:Pons-Batle}
The number $TC_{n,k}$ of tree-child networks with $n$ leaves and $k$ reticulation nodes satisfies
\begin{align}\label{eq:Pons-Batle}
TC_{n,k} = \frac{n!}{(n-k)!} \, c_{n-1,k}.
\end{align}
\end{theorem}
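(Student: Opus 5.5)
The plan is to show that both sides of \eqref{eq:Pons-Batle} satisfy the same recurrence with the same initial values. I would do this by deleting the \emph{last} letter $\omega_{n-1}$ from a Pons--Batle word and, on the network side, deleting a suitably chosen leaf. First I would normalise the network count. Put $T_{n,k} := TC_{n,k}\,(n-k)!/n!$. The factor $n!/(n-k)!$ should be interpreted as the number of ways to label the $k$ leaves sitting below reticulations by an injective ordered choice among the $n$ labels. The remaining $n-k$ leaves are then labelled canonically, for instance through the order in which they appear in a fixed canonical decomposition into tree components. The claim to prove becomes $T_{n,k}=c_{n-1,k}$.

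\textbf{Step 1 (word side).} Take a word in $\mathcal{C}_{n-1,k}$ and remove all occurrences of $\omega_{n-1}$. The condition with $j=n-1$ only forbids $\omega_{n-1}$ from getting ahead of the letters $\omega_i$, $i<n-1$, that have already appeared. Hence the removal yields a word in $\mathcal{C}_{n-2,k}$ if $\omega_{n-1}$ occurs twice, and in $\mathcal{C}_{n-2,k-1}$ if it occurs three times. Conversely, the number of ways to reinsert $\omega_{n-1}$ twice or three times depends on the word. Its $r$-th occurrence may be placed at any position at which every letter already started has at least $r$ occurrences. So I would introduce the statistic $s_r(w)$, the number of admissible slots for the $r$-th copy. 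I would then write a refined recurrence for the generating polynomial $C_{n,k}(u)=\sum_w u^{\mathrm{stat}(w)}$, choosing the statistic so that it is preserved under the insertion, in the style of a kernel/catalytic-variable equation.

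\textbf{Step 2 (network side).} I would use the component-graph decomposition of tree-child networks. Removing the tree component that contains the last-labelled non-reticulate leaf gives a network with one fewer leaf, and either the same number of reticulations or one fewer. Inserting a leaf back has two cases. A tree leaf can be attached to any of the $2(n+k)-1$ edges. A reticulation can be attached to an ordered/unordered pair of edges subject to the tree-child constraint. The number of admissible pairs depends on the structure, and the plan is to encode this dependence by a catalytic parameter matched to the word statistic from Step 1 (e.g.\ a path-length or ``open reticulation'' count).

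\textbf{Step 3 and the obstacle.} I would verify that the two refined recurrences coincide, by exhibiting a statistic-preserving correspondence between insertion slots for $\omega_{n-1}$ and attachment choices for the new leaf. Together with equal initial values $T_{n,0}=c_{n-1,0}$ (the double-occurrence words are counted by $(2n-3)!!$, matching rooted binary trees), induction would give the identity. The main obstacle is Step 3. The naive slot counts on the two sides do not agree termwise, because the tree-child condition forbids attaching a reticulation to an edge into an existing reticulation. So one must find the right catalytic statistic, and I would try first ``number of letters currently having exactly two occurrences in the prefix'' on the word side against ``number of tree paths avoiding reticulation edges'' on the network side. If a direct match fails, the fallback is to prove that both refined generating functions satisfy the same functional equation and solve it by the kernel method.
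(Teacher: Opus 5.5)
Your proposal has a genuine gap. Step~3 is not a technical obstacle you can leave for later: it is the entire content of the theorem, and Steps~1--2 do not reduce it.

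\textbf{The network side.} Given \eqref{eq:reccnk}, the statement is equivalent to $T_{n,k}=T_{n,k-1}+(2n+k-3)\,T_{n-1,k}$, together with $T_{n,0}=(2n-3)!!$ and $T_{n,k}=0$ for $k\ge n$. You would need a decomposition of tree-child networks that realises exactly this recurrence. None is given, and the ingredients you list cannot supply one directly.

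Written for labelled networks, the recurrence reads
$\TC_{n,k}=(n-k+1)\TC_{n,k-1}+\frac{n(2n+k-3)}{n-k}\TC_{n-1,k}$. This coefficient is not even an integer in general (it equals $33/2$ for $n=6$, $k=2$). So no rule of the form ``insert one labelled leaf into a labelled network'' can produce it.

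Everything therefore hinges on a combinatorial model for the normalised number $T_{n,k}$, and the model you propose is wrong. If the remaining $n-k$ leaves were labelled canonically, $T_{n,0}$ would count unlabelled trees. Yet $T_{3,0}=3$, while there is only one unlabelled rooted binary tree with three leaves; your own initial value $(2n-3)!!$ counts labelled trees. Splitting off the labels of $k$ distinguished leaves is known only for one-component networks (Proposition~\ref{potc}); doing it in general is essentially the open bijective problem.

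The local moves are also not as you state them:
\begin{itemize}
\item Deleting a leaf need not give a tree-child network. Suppressing its parent can make a reticulation the only child of a reticulation, give a tree node two reticulation children, or create a double edge.
\item Deleting a tree component removes a whole subtree, not one leaf.
\item A network with $n$ leaves and $k$ reticulations has $2n+3k-1$ edges (counting the root edge), not $2(n+k)-1$.
\end{itemize}

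\textbf{The word side.} Step~1 misreads Definition~\ref{def:PonsBatleWords}. In $\mathcal{C}_{n-1,k}$ exactly $\omega_1,\dots,\omega_k$ occur three times, so $\omega_{n-1}$ occurs three times only when $k=n-1$. For $k<n-1$, deleting it always lands in $\mathcal{C}_{n-2,k}$, and the $\mathcal{C}_{n-2,k-1}$ case you rely on never arises. No catalytic statistic is needed here. The last letter of a word in $\mathcal{C}_{n,k}$ is either the third copy of $\omega_k$ or the second copy of $\omega_n$, whose first copy may sit in any of $2n+k-1$ positions; this is exactly \eqref{eq:reccnk}. So all of the difficulty lies on the network side, which is precisely where your plan stops. ``Find the right statistic, otherwise use the kernel method'' is not an argument.

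\textbf{Comparison with the paper.} The paper does not prove this theorem either; it quotes it from \cite{linetall2026ProofPonsBatle}, whose proof is computational. The paper's route avoids networks entirely:
\begin{itemize}
\item It uses the bijective formula $\TC_{n,k}=\frac{n!}{2^{n-k-1}}b_{n-1,k}$ (Proposition~\ref{bnk}) to reduce the statement to the tableau identity \eqref{bceq}, i.e.\ $2B_k(z)=z^{2(k-1)}C_k(2z^2)$.
\item It computes both sides as rational functions of $E$ via Theorem~\ref{theo:recursiveAk} and Proposition~\ref{prop:BkODE}.
\item It checks their equality for $k\le 250$.
\end{itemize}
If you want an approach with traction, reduce to that identity rather than look for a leaf-insertion recurrence on networks.
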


This identity is particularly remarkable because, prior to its discovery, no effective method was known for enumerating tree-child networks with prescribed numbers of leaves and reticulation nodes. In contrast, the quantities $c_{n,k}$ are easy to compute, as Pons and Batle~\cite{BatlePons2021Phylo} showed they satisfy the simple recurrence
\begin{align}
    \label{eq:reccnk}
    c_{n,k} = c_{n,k-1} + (2n+k-1)c_{n-1,k}.
\end{align}

Despite its seemingly simple form, the problem of proving \eqref{eq:Pons-Batle} remained open until early 2026, when Lin et al.~\cite{linetall2026ProofPonsBatle} established the identity in full generality. However, their proof is computational and non-bijective. As demonstrated by the work of Caraceni et al.~\cite{CARACENI2022112944}, bijective and combinatorial methods offer much deeper structural insights into tree-child networks. Thus, developing a combinatorial understanding of the Pons--Batle Identity remains an important open problem.

Significant effort has already been devoted to this pursuit. Shortly after the conjecture was proposed, Fuchs, Liu, and Yu~\cite{FuchsLiuYu2021Note} obtained partial bijective results for one-component networks:

\begin{proposition}[{\cite[Theorem~1]{FuchsLiuYu2021Note}}]\label{potc}
There exists a bijection
\[
\mathcal{OTC}_{n,k} \;\longleftrightarrow\; \mathcal{H}_{n-1,k} \times \{1, \dots, n\}^{\underline{k}},
\]
where $A^{\underline{k}}$ denotes the set of $k$-tuples of distinct elements of $A$. Consequently,
\[
|\mathcal{OTC}_{n,k}| = \frac{n!}{(n-k)!} \, |\mathcal{H}_{n-1,k}|,
\]
where $\mathcal{OTC}_{n,k}$ is the class of one-component tree-child networks with $n$ leaves and $k$ reticulation nodes, and $\mathcal{H}_{n,k}$ is the subset of words in $\mathcal{C}_{n,k}$ such that, before the third occurrence of any letter, all letters appear exactly twice.
\end{proposition}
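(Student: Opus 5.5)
We will build the bijection by decomposing a one-component tree-child network along its reticulations, encoding the resulting tree with distinguished structure as a word in $\mathcal{H}_{n-1,k}$, and recording separately which leaves sit below the reticulations.

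\textbf{Step 1: separate the leaf data.} In a one-component tree-child network $N\in\mathcal{OTC}_{n,k}$, every reticulation node's child is a leaf. This is the defining property of one-component networks: removing the reticulation edges leaves one nontrivial tree component, and all other components are single leaves. So $N$ has exactly $k$ reticulation leaves, each hanging below a distinct reticulation node. We order the reticulations canonically, for instance by the order in which they are first met in a fixed traversal of the unlabeled tree component. The labels of the corresponding reticulation leaves then form a $k$-tuple of distinct elements of $\{1,\dots,n\}$; this is the factor $\{1,\dots,n\}^{\underline{k}}$. After deleting these $k$ leaves and their reticulation nodes, what remains is a binary tree with $n-k$ labeled leaves. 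On it sit $2k$ marked attachment points: each reticulation contributes two incoming edges, and each such edge is subdivided by a tree node.

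\textbf{Step 2: encode the remaining structure.} We encode the tree with its attachment points by the standard recursive insertion that underlies $\mathcal{A}_n$ and $\mathcal{C}_{n,k}$, as in the Fuchs--Yu--Zhang bijection. Leaves are processed in a canonical order, for example by decreasing label after relabeling the $n-k$ tree leaves by $1,\dots,n-1$ minus one via a root-adjacent convention, which yields the shift $n\mapsto n-1$. Each non-reticulation letter $\omega_j$ with $j>k$ records, through its two occurrences, the edge where a cherry or leaf is inserted. Each reticulation letter $\omega_i$ with $i\le k$ additionally carries a third occurrence recording where its second parent edge attaches. The prefix condition of Definition~\ref{def:PonsBatleWords} should correspond exactly to the constraint that an insertion only uses edges that already exist. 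The extra condition defining $\mathcal{H}_{n-1,k}$, that all letters reach two occurrences before any third occurrence appears, should correspond to the one-component property. All reticulation attachments are made only after the whole tree skeleton is built, so no reticulation is nested below another reticulation's subtree.

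\textbf{Step 3: verify bijectivity and counting.} We define the inverse map by reading a word left to right and rebuilding the tree. We check well-definedness at each step using the prefix condition, and check that the tree-child property holds automatically. It holds because each reticulation's child is a leaf and every tree node keeps a tree child, since the two parents of a reticulation cannot be the same node or stacked. We count available edge choices at each step and compare them with the number of admissible letter positions. The cardinality identity $|\mathcal{OTC}_{n,k}|=\frac{n!}{(n-k)!}|\mathcal{H}_{n-1,k}|$ then follows from $|\{1,\dots,n\}^{\underline{k}}|=n!/(n-k)!$.

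The main obstacle is Step 2. We must define the canonical orders so that the map to $\mathcal{H}_{n-1,k}$ is well defined despite symmetries of the unlabeled tree. In particular, the two incoming edges of a reticulation are indistinguishable, and the third occurrence must encode an unordered pair consistently. To handle this, we would first fix the order of each pair of parents by the order of their positions in the word, using the "first two occurrences" as the earlier parent. We would then check the match on small cases, such as $n=3,k=1$, against direct enumeration before proving it in general.
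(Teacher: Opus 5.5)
Your Step~1 reduction is sound. Step~2, however, is only a description of what should happen (``should correspond''). For the class $\mathcal{H}_{n-1,k}$ as defined in the statement, it cannot be carried out at all.

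In a word of $\mathcal{H}_{n-1,k}$, every letter has already occurred twice when the first third occurrence appears. The prefix condition then forces the third occurrences of $\omega_1,\dots,\omega_k$ to appear in increasing order. So every such word has the form $w\,\omega_1\omega_2\cdots\omega_k$ with $w\in\mathcal{C}_{n-1,0}$, and conversely every such concatenation lies in $\mathcal{H}_{n-1,k}$. Two consequences follow:
\begin{itemize}
\item the third occurrences are a forced suffix and carry no information, so they cannot ``record where the second parent edge attaches'';
\item $|\mathcal{H}_{n-1,k}|=(2n-3)!!$ for every $k$.
\end{itemize}
Your own Step~1 leaves a tree on the $n-k$ remaining leaves, with $2(n-k)-1$ edges counting the root edge, carrying $k$ unordered pairs of attachment points. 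There are
\[
(2n-2k-3)!!\cdot\frac{(2n-2)!}{(2n-2k-2)!\,2^k\,k!}=\binom{n-1}{k}(2n-3)!!
\]
such objects, which is strictly more than $(2n-3)!!$ whenever $0<k<n-1$. The test case you propose already shows this. For $n=3$, $k=1$ there are $3\cdot 6=18$ one-component networks: choose the leaf below the reticulation, then place its two parents as an unordered pair on the three edges of the remaining cherry. Equivalently, $\mathrm{TC}_{3,1}=21$ minus the $3$ networks whose reticulation sits above a cherry. But $\frac{3!}{2!}\,|\mathcal{H}_{2,1}|=3\cdot|\{aabba,ababa,baaba\}|=9$.

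So, read literally, the identity is false, and no refinement of your Step~2 can prove it. The paper gives no proof of its own; it only quotes Fuchs--Liu--Yu. Whatever class is intended there must have $\binom{n-1}{k}(2n-3)!!$ elements, with real freedom in where the third occurrences go. For instance, the words of $\mathcal{C}_{n-1,k}$ in which every third occurrence immediately follows some second occurrence form such a class: each is a skeleton in $\mathcal{C}_{n-1,0}$ plus a choice of $k$ of its $n-1$ second occurrences.

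A correct argument must start from such a definition. It must then explain how your tree on $n-k$ leaves with $2k$ attachment points becomes a skeleton counted by $(2n-3)!!$ (equinumerous with trees on $n$ leaves), together with a $k$-subset of slots. Your ``relabeling by $1,\dots,n-1$ minus one'' does not bridge this gap between $n-k$ and $n-1$. Finally, the asserted equivalence between ``all letters twice before any third occurrence'' and the one-component property is unsupported, and the count above shows it is false.
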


Later, in 2024, Fuchs et al.~\cite{Changetal2023dcombining} proposed a different generalization of these word classes:

\begin{definition}\label{bnkd}
Let $\mathcal{B}_{n,k}$ denote the class of words over the alphabet $\{\omega_1, \dots, \omega_n\}$ in which $k$ letters occur three times and $n-k$ letters occur twice, subject to the following condition: in every prefix of a word, either $\omega_i$ has not yet appeared or its number of occurrences is at least as large as that of $\omega_j$ for all $j>i$. For letters that appear only twice, the first and second occurrences are treated as the second and third occurrences, respectively.
\end{definition}

As a minimal nontrivial example, $\mathcal{B}_{2,1} = \{aabbb, aabab, aaabb, babab, baabb, abbab, ababb\}$.
By definition, $\mathcal{B}_{n,n} = \mathcal{A}_n$, yielding $a_n = b_{n,n}$. Crucially, they proved the following enumeration result:

\begin{proposition}[{\cite[Theorem~3.4]{Changetal2023dcombining}}]\label{bnk}
Let $b_{n,k} = |\mathcal{B}_{n,k}|$. Then
\[
\mathrm{TC}_{n,k} = \frac{n!}{2^{n-k-1}} b_{n-1,k}.
\]
\end{proposition}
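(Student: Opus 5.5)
The plan is to reduce the statement to an identity between the two word families, using the Pons--Batle Identity (Theorem~\ref{thm:Pons-Batle}), which the paper states earlier, rather than to go back to the networks. Put $m=n-1$. Since $2^{n-k-1}=2^{m-k}$ and $(n-k)!=(m+1-k)!$, comparing with \eqref{eq:Pons-Batle} shows that Proposition~\ref{bnk} is equivalent to
\[
(m+1-k)!\, b_{m,k} \;=\; 2^{m-k}\, c_{m,k}\qquad (0\le k\le m).
\]
As a sanity check, $m=2$, $k=1$ gives $2\cdot 7 = 2\cdot 7$, matching $|\mathcal{B}_{2,1}|=|\mathcal{C}_{2,1}|=7$. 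I would prove this word identity by showing that both sides satisfy the same recurrence with the same initial values.

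For the right-hand side, the recurrence \eqref{eq:reccnk} converts into a recurrence for $d_{m,k}:=2^{m-k}c_{m,k}/(m+1-k)!$:
\[
d_{m,k}=\frac{m+2-k}{2}\,d_{m,k-1}+\frac{2(2m+k-1)}{m+1-k}\,d_{m-1,k}.
\]
Its coefficients are rational, which is a sign that this form is not the natural one. I would therefore work with the cleared form $(m+1-k)!\,b_{m,k}$ and aim for an integer recurrence of the shape
\[
(m+1-k)!\,b_{m,k} = \alpha\,(m+2-k)!\,b_{m,k-1}+\beta\,(m-k)!\,b_{m-1,k},
\]
mirroring \eqref{eq:reccnk}.

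To get a recurrence for $b_{m,k}$ directly, I would decompose a word of $\mathcal{B}_{m,k}$ according to its last-indexed letter $\omega_m$. This letter has the weakest prefix constraint: it only needs its count not to exceed that of every $\omega_i$ with $i<m$ that has already appeared. Two cases arise.
\begin{itemize}
\item If $\omega_m$ occurs twice, its occurrences are treated as the second and third copies. Deleting them leaves a word in $\mathcal{B}_{m-1,k}$, or a word in $\mathcal{B}_{m-1,k-1}$ once relabelling is accounted for, since the $k$ thrice-occurring letters can be any $k$ letters.
\item If $\omega_m$ occurs three times, deleting it leaves a word in $\mathcal{B}_{m-1,k-1}$.
\end{itemize}
In each case I must count the admissible reinsertions. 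The number of valid positions depends on the time at which the other letters reach counts $2$ and $3$, so it is not constant.

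The step I expect to be the main obstacle is exactly this non-constant insertion count. My first attempt would be to refine by a catalytic statistic, for instance the number of letters whose last occurrence has not yet been placed. I would write a functional equation for the refined generating function and check that summing over the statistic produces the same linear relation as \eqref{eq:reccnk} after the factorial and power-of-two rescaling. If that fails, my fallback is a direct combinatorial map. In $\mathcal{C}_{m,k}$ the $m-k$ twice-occurring letters are forced to be $\omega_{k+1},\dots,\omega_m$. In $\mathcal{B}_{m,k}$ their identities are free, but their two copies play the roles of the second and third occurrences. I would look for a $2^{m-k}$-to-$(m+1-k)!$ correspondence that, for each twice-occurring letter, chooses which of two slots (first or second occurrence) is suppressed, and compensates with the orderings of those letters. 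Finally, I would verify the base cases $k=0$ and $m=k$, where $b_{m,m}=c_{m,m}=a_m$, so that the recurrence determines everything.
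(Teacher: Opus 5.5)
\textbf{There is a genuine gap.} Your reduction is algebraically correct: with $m=n-1$, Theorem~\ref{thm:Pons-Batle} turns the statement into $(m+1-k)!\,b_{m,k}=2^{m-k}c_{m,k}$, which is exactly \eqref{bceq}, and your rescaled form of \eqref{eq:reccnk} is right. But the proof stops there, and what remains is not routine. The identity \eqref{bceq} is precisely what the paper calls the core combinatorial challenge. Its generating-function version \eqref{eq:relaakBk} is only checked by computer for $k\le 250$, and the authors state that they do not know how to prove it for general $k$. At present \eqref{bceq} is known only by combining Theorem~\ref{thm:Pons-Batle} with the very proposition you are proving, so you cannot borrow it. You would also have to make sure that the computational proof of Theorem~\ref{thm:Pons-Batle} does not itself use this proposition. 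The paper does not argue this way at all. It takes the statement from \cite{Changetal2023dcombining}, where it is proved by a direct bijective encoding of tree-child networks by $\mathcal{B}$-words (equivalently, by tableaux counted by $y_{n-1,n-k-1,k}$), without ever passing through $c_{n,k}$.

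Concretely, the missing step is the recurrence $(m+1-k)!\,b_{m,k}=\frac{1}{2}(m+2-k)!\,b_{m,k-1}+2(2m+k-1)(m-k)!\,b_{m-1,k}$ for $\mathcal{B}$-words. These are the forced values of your $\alpha,\beta$; note that $\alpha=\frac12$ is not an integer. Your decomposition is not shaped to produce this recurrence. Deleting $\omega_m$ always lowers $m$. Two occurrences leave a word of $\mathcal{B}_{m-1,k}$ (never $\mathcal{B}_{m-1,k-1}$), and three occurrences leave a word of $\mathcal{B}_{m-1,k-1}$. In both cases the number of reinsertions varies from word to word. The target relation, by contrast, involves $b_{m,k-1}$ at the same $m$ and has non-integral coefficients $\frac{m+2-k}{2}$ and $\frac{2(2m+k-1)}{m+1-k}$, and you give no mechanism bridging the two.

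Adding a catalytic statistic is essentially what Theorem~\ref{theo:recursiveAk} already does with the variable $u$ in $F_k(z,u)$. It determines $B_k$ only recursively in $k$, and matching the result with the explicit form of $C_k$ from Proposition~\ref{prop:BkODE} for every $k$ is exactly the open part. Your fallback, a $2^{m-k}$-to-$(m+1-k)!$ correspondence, is not specified, and constructing it would be the bijective explanation of \eqref{bceq} that the paper leaves open. As written, the proposal replaces the statement by an identity for which no proof independent of the statement itself is currently known.
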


Although computing $b_{n,k}$ is more involved than computing $c_{n,k}$, Proposition~\ref{bnk} effectively completed the exact enumeration of tree-child networks. Nevertheless, the Pons--Batle Identity~\eqref{eq:Pons-Batle} remains highly appealing due to the simpler structure of $\mathcal{C}_{n,k}$. Combining Theorem~\ref{thm:Pons-Batle} and Proposition~\ref{bnk}, the core combinatorial challenge reduces to explaining the identity
\begin{align}\label{bceq}
    \frac{b_{n,k}}{2^{n-k}} = \frac{c_{n,k}}{(n+1-k)!}.
\end{align}

In this extended abstract, we study the combinatorial nature of Equation~\eqref{bceq}. We introduce a lattice path framework that not only provides an algorithmic proof of the identity for finite $k$ (which we verify for $k \leq 250$) but, more importantly, reveals the probabilistic behavior of these networks, yielding new limit laws. Although we do not completely resolve the problem bijectively, our methods open a robust pathway toward extensions to $d$-combining networks, to which our approach can be readily generalized.

\subsection{Young tableaux with walls}

In 2021, Banderier and Wallner~\cite{banderier2021young} established a bijection between these classes of constrained words and \emph{Young tableaux with walls}. In their framework, Young tableaux are generalized by allowing certain adjacent pairs of boxes to violate the usual monotonicity conditions; such violations are encoded by inserting a \emph{wall} between the corresponding boxes. Each tableau is filled with the integers $1,\dots,n$, where $n$ is the total number of boxes, so that entries increase from left to right along rows and from bottom to top along columns, except across walls, where no monotonicity condition is imposed.

Under this correspondence, each element of the class $\mathcal{A}_n$ in Definition~\ref{an} is mapped to a $3\times n$ Young tableau with walls (three rows and $n$ columns), in which walls are inserted between every pair of adjacent boxes in the bottom row (that is, the $n$ boxes are separated by $n-1$ walls). We denote this class of Young tableaux by $\mathcal{A}^*_n$; see Figure~\ref{a2bijection} in Appendix~\ref{apb} for an example.

It is worth emphasizing that not only does the class $\mathcal{A}_n$ admit such a correspondence, but all of the word classes introduced above can be treated in a similar way. For example, each element of the class $\mathcal{C}_{n,k}$ in Definition~\ref{def:PonsBatleWords} corresponds to a Young tableau with walls consisting of three rows, where the top row contains $k$ boxes and the other two rows contain $n$ boxes each, with all rows left-justified. As before, walls are inserted between every pair of adjacent boxes in the bottom row. We then consider all fillings of these tableaux with the integers $1,\dots,2n+k$. We denote this class of Young tableaux by $\mathcal{C}^*_{n,k}$; see Figure~\ref{c21bijection} in Appendix~\ref{apb} for an example. 

The class $\mathcal{B}_{n,k}$ in Definition~\ref{bnk} admits a similar correspondence. Here, the associated Young tableaux have three rows, where the top two rows each contain $n$ boxes, while the bottom row consists of $k$ boxes placed in arbitrary positions among the $n$ columns. Whenever two boxes in the bottom row are adjacent, a wall is inserted between them. As before, we consider all fillings of these tableaux with the integers $1,\dots,2n+k$. We denote this class of Young tableaux by $\mathcal{B}^*_{n,k}$; see Figure~\ref{b21bijection} in Appendix~\ref{apb} for an example.

Moreover, Figure~\ref{abcbijection} in Appendix~\ref{apb} shows more complex examples of the three classes in terms of Young tableaux.

One of our key steps is to embed both classes $\mathcal{B}^*_{n,k}$ and $\mathcal{C}^*_{n,k}$ into a $3$-parameter class $\mathcal{D}^*_{n,k,\ell}$.
Our goal is to construct a model that incorporates the features of both $\mathcal{B}^*_{n,k}$ and $\mathcal{C}^*_{n,k}$, so that each arises as a special case of $\mathcal{D}^*$. 

We define the class $\mathcal{D}^*_{n,k,\ell}$ ($k,\ell \le n$) as follows. Starting from a $3\times n$ Young tableau, as in $\mathcal{A}^*_n$, we remove the rightmost $n-k$ boxes from the top row and arbitrarily remove $n-\ell$ boxes from the bottom row, so that the resulting tableau contains $n+k+\ell$ boxes in total. As before, whenever two boxes in the bottom row are adjacent, a wall is inserted between them. Finally, we consider all fillings of such tableaux with the integers $1,\dots,n+k+\ell$ subject to the usual Young tableau conditions. 
For illustration, Figure~\ref{nonouter case2b} shows two elements of $\mathcal{D}^*_{6,3,2}$.

Clearly, under this definition we have $\mathcal{D}^*_{n,n,n}=\mathcal{A}^*_n$, $\mathcal{D}^*_{n,n,k}=\mathcal{B}^*_{n,k}$, and $\mathcal{D}^*_{n,k,n}=\mathcal{C}^*_{n,k}$. We write $d_{n,k,\ell}=|\mathcal{D}^*_{n,k,\ell}|$, and our goal is therefore to prove the identity
\begin{align*}
\frac{d_{n,n,k}}{2^{n-k}} = \frac{d_{n,k,n}}{(n+1-k)!}.
\end{align*}

\section{From Young tableaux to lattice paths}

\subsection{The Identity in terms of Young tableaux with walls and holes}

We now study the class $\mathcal{D}^*_{n,k,\ell}$ and its cardinality $d_{n,k,\ell}$ in more detail. For convenience, we reinterpret this model from a slightly different perspective.
 
Let $y_{k,\ell_1,\ell_2}$ denote the number of Young tableaux of the following shape (see Figure~\ref{nonouter case2b} for two examples): the top row consists of $k$ boxes in columns $1$ through $k$; the middle row consists of $n := \ell_1 + \ell_2$ boxes in columns $1$ through $n$; and the bottom row consists of $\ell_2$ boxes placed arbitrarily among columns $1$ through $n$, with the condition that adjacent boxes are separated by walls. 
Throughout, such a Young tableau is a standard filling with the integers $1,2,\dots,k+\ell_1+2\ell_2$ that increases along rows and columns, except that the left-to-right condition is waived for boxes in the bottom row separated by walls.

Under this reinterpretation, there is a simple correspondence 
$y_{k,\ell_1,\ell_2} = d_{\ell_1+\ell_2,k,\ell_2}.$

\begin{figure}[H]
    \centering
    \includegraphics[scale=0.65]{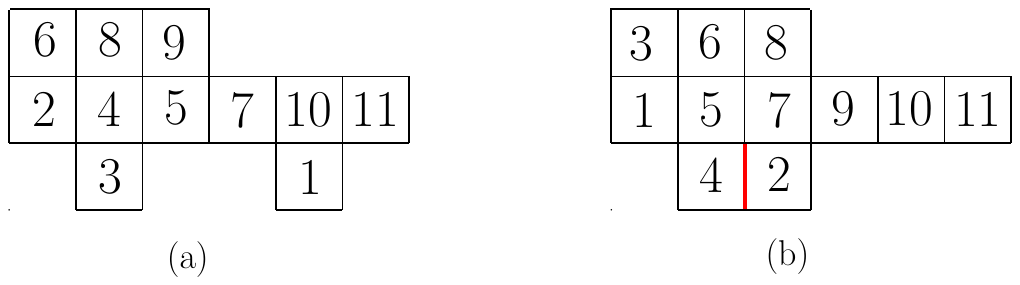}
    \caption{Two examples of Young tableaux with walls and holes of type $(k,\ell_1,\ell_2)=(3,4,2)$. Boxes in the third row are separated by walls. Both examples belong to $\mathcal{D}^*_{6,3,2}$.}
    \label{nonouter case2b}
\end{figure}

Pons--Batle words defined in Definition~\ref{def:PonsBatleWords} are in bijection with Young tableaux with walls and holes.

\begin{lemma}\label{lem:PBwords-legs}
For $n>k\ge 0$, the set $\mathcal{C}_{n-1,k}$ is in bijection with Young tableaux with walls and holes counted by $y_{k,0,n-1}$.
In particular, $|\mathcal{C}_{n-1,k}|=y_{k,0,n-1}$.
\end{lemma}

\begin{proof}
We describe a bijection between $\mathcal{C}_{n-1,k}$ and the set of Young tableaux counted by $y_{k,0,n-1}$.
Let $L := 2n+k-2$ be the length of the words.
Given a word $w \in \mathcal{C}_{n-1,k}$, we construct a tableau by placing the integers $1,2,\dots,L$ sequentially into the shape.
Reading $w$ from left to right, if the $m$-th letter is $\omega_j$, we place the integer $m$ into the next available box of column $j$ (filling from bottom to top).

Condition~(i) in Definition~\ref{def:PonsBatleWords} ensures that every column receives the correct number of entries, matching the shape of $y_{k,0,n-1}$.
By construction, the columns are strictly increasing.
Moreover, Condition~(ii) implies that for any $i<j$, the $r$-th occurrence of $\omega_i$ appears before the $r$-th occurrence of $\omega_j$.
This guarantees that the top two rows are strictly increasing; the third row imposes no horizontal condition due to the walls.

Conversely, given such a tableau, we recover the word by reading the entries $1,2,\dots,L$ in increasing order and recording the column index of each entry. These maps are inverses of each other.
\end{proof}

Therefore, the Pons--Batle Identity~\eqref{eq:Pons-Batle} is equivalent to 
\begin{align*}
	\TC_{n,k} = \frac{n!}{(n-k)!} y_{k;0,n-1}, \qquad  \text{ for } n > k \geq 0.
\end{align*}

By adapting the ideas from the proof of Lemma~\ref{lem:PBwords-legs} to the word classes in Proposition~\ref{potc} and Definitions~\ref{bnkd}, one can obtain analogous results for the subclass of one-component networks~\cite{FuchsLiuYu2021Note}. Moreover, in~\cite[Theorem~3.4]{Changetal2023dcombining} we proved bijectively that
\[
\TC_{n,k} = \frac{n!}{2^{n-k-1}}\, y_{n-1;n-k-1,k}, \qquad \text{for } n>k\ge 0.
\]

The main advantage of these reformulations is that the identity can be phrased entirely in terms of Young tableaux with walls.
Combining the last two mentioned results, we get\footnote{For simpler presentation we shifted the index $n$ by $1$.}
\begin{proposition}
The Pons--Batle Identity~\eqref{eq:Pons-Batle} is equivalent to
\begin{align}
    \label{eq:ConjPB_YT}
    y_{n,n-k,k} = \frac{2^{n-k}}{(n-k+1)!} y_{k,0,n}, \qquad  \text{ for } n \geq k \geq 0.
\end{align}
\end{proposition}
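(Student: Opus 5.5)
We need to show the equivalence. We have two formulas for $\TC_{n,k}$, both valid for $n>k\ge 0$:
\[
\TC_{n,k}=\frac{n!}{(n-k)!}\,c_{n-1,k}=\frac{n!}{(n-k)!}\,y_{k,0,n-1},
\]
which is the Pons--Batle Identity~\eqref{eq:Pons-Batle} rewritten via Lemma~\ref{lem:PBwords-legs}, and
\[
\TC_{n,k}=\frac{n!}{2^{n-k-1}}\,y_{n-1,n-k-1,k},
\]
which holds unconditionally, since it is the bijective result of \cite[Theorem~3.4]{Changetal2023dcombining}.

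The plan is to use the second formula as a known identity and substitute it into the first. This shows that \eqref{eq:Pons-Batle} holds for a given pair $(n,k)$ with $n>k\ge0$ if and only if
\[
\frac{n!}{2^{n-k-1}}\,y_{n-1,n-k-1,k}=\frac{n!}{(n-k)!}\,y_{k,0,n-1}.
\]

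Next, cancel $n!$ and solve for $y_{n-1,n-k-1,k}$, which gives
\[
y_{n-1,n-k-1,k}=\frac{2^{n-k-1}}{(n-k)!}\,y_{k,0,n-1}.
\]
Now apply the index shift $n\mapsto n+1$ mentioned in the footnote, that is, set $N=n-1$. The condition $n>k\ge0$ becomes $N\ge k\ge0$, and the identity becomes
\[
y_{N,N-k,k}=\frac{2^{N-k}}{(N-k+1)!}\,y_{k,0,N},
\]
which is exactly \eqref{eq:ConjPB_YT}. Since every step is reversible (all factors are nonzero) and the map between parameter ranges is a bijection, the two statements are equivalent for each individual pair $(n,k)$, and hence for all pairs.

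The argument is purely algebraic; the only point requiring care is the bookkeeping of indices. One must check that the ranges match exactly and that the definition $y_{k,\ell_1,\ell_2}$, with $n=\ell_1+\ell_2$ middle-row boxes, is applied consistently: $y_{N,N-k,k}$ has middle row $N$ and $y_{k,0,N}$ has middle row $N$, which is consistent with $d_{N,N,k}$ and $d_{N,k,N}$ respectively. As a sanity check, the resulting identity agrees with \eqref{bceq}. This also justifies the \emph{equivalence}: the $b$-formula is unconditional, so no circularity arises.
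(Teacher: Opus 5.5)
Your proposal is correct and matches the paper's argument: equate $\TC_{n,k}=\frac{n!}{(n-k)!}\,y_{k,0,n-1}$ (obtained via Lemma~\ref{lem:PBwords-legs}) with the unconditional formula $\TC_{n,k}=\frac{n!}{2^{n-k-1}}\,y_{n-1,n-k-1,k}$ from \cite[Theorem~3.4]{Changetal2023dcombining}, cancel $n!$, and shift $n\mapsto n+1$. You also check explicitly that the range $n>k\ge 0$ becomes $N\ge k\ge 0$ and that the equivalence holds pair by pair; the paper leaves both points implicit.
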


The advantage of the reformulation in terms of Young tableaux with walls and holes is that $y_{k,\ell_1,\ell_2}$ admits the following simple recurrence relation generalizing the recurrence~\eqref{eq:reccnk} of $c_{n,k}$.
It allowed us, for the first time, to verify the conjecture for all $0 \le n \le 300$.

\begin{lemma}
    The numbers $y_{k, \ell_1, \ell_2}$ of Young tableaux with walls and holes satisfy
    {
    \newcommand{\nrwop}[1]{\hspace{-0.075em} #1 \hspace{-0.075em}}
    \begin{align}
        \label{eq:rec_ykl1l2}
        y_{k, \ell_1, \ell_2} = y_{k-1,\ell_1,\ell_2} + y_{k, \ell_1-1, \ell_2} + (2\ell_2 \nrwop{+} \ell_1 \nrwop{+} k \nrwop{-} 1) y_{k, \ell_1, \ell_2-1},
        \qquad \text{for } \ell_1 \nrwop{+} \ell_2 \geq k > 0,
    \end{align}
    }
    with the initial condition $y_{0,0,0} = 1$ and the boundary condition $y_{k,\ell_1,\ell_2} = 0$ outside of the cone $\ell_1 + \ell_2 \geq k > 0$.
\end{lemma}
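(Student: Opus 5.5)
The plan is to decompose according to the position of the largest entry $N:=k+\ell_1+2\ell_2$ in a tableau counted by $y_{k,\ell_1,\ell_2}$, and to show that the three possible positions give the three terms of \eqref{eq:rec_ykl1l2}. Set $n=\ell_1+\ell_2$, so the middle row fills columns $1,\dots,n$, the top row fills columns $1,\dots,k$, and the $\ell_2$ bottom boxes lie in some columns among $1,\dots,n$ (the count includes the choice of these columns). Entries increase upward in columns, and rightward in the top two rows. In the bottom row there is no horizontal condition because of the walls.

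First I determine where $N$ can sit. A bottom box always has a middle box above it, so $N$ is not in the bottom row. In the top row only column $k$ is possible. In the middle row only column $n$ is possible, and only when column $n$ has no top box, i.e.\ $k<n$. This gives three cases.

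\emph{Case 1: $N$ is in the top row, column $k$.} Deleting this box gives a valid tableau of type $(k-1,\ell_1,\ell_2)$. Conversely, adding a box labelled $N$ at the end of the top row is always valid. This accounts for the term $y_{k-1,\ell_1,\ell_2}$.

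\emph{Case 2: $N$ is in the middle row, column $n$, and column $n$ has no bottom box.} Deleting the box gives type $(k,\ell_1-1,\ell_2)$ on $n-1$ columns, and the inverse is to append such a column. This accounts for $y_{k,\ell_1-1,\ell_2}$.

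\emph{Case 3: $N$ is in the middle row, column $n$, and there is a bottom box below it with some entry $v$.} The only constraint on $v$ is $v<N$, since walls remove all horizontal conditions and nothing lies below it. I delete both boxes of column $n$ and standardize the remaining $N-2$ entries, obtaining a tableau of type $(k,\ell_1,\ell_2-1)$. Conversely, from such a tableau I choose $v\in\{1,\dots,N-1\}$, shift every entry $\ge v$ up by one, and append a column with bottom entry $v$ and middle entry $N$. This is a bijection, giving the factor $N-1=2\ell_2+\ell_1+k-1$ times $y_{k,\ell_1,\ell_2-1}$.

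The main obstacle is checking the boundary cases against the convention that $y$ vanishes outside the cone.
\begin{itemize}
\item If $k=n$, Cases 2 and 3 are impossible. Correspondingly, $y_{k,\ell_1-1,\ell_2}$ and $y_{k,\ell_1,\ell_2-1}$ both have $\ell_1+\ell_2-1<k$, so they vanish.
\item If $\ell_1=0$ or $\ell_2=0$, the corresponding case is empty, and the matching term vanishes.
\item When $k=1$, Case 1 produces type $(0,\ell_1,\ell_2)$, which lies outside the cone $\ell_1+\ell_2\ge k>0$ unless $\ell_1=\ell_2=0$.
\end{itemize}
The last point needs care. I would interpret the boundary condition as meaning $y_{0,\ell_1,\ell_2}=0$ except $y_{0,0,0}=1$, and verify that this matches the intended count. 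With this convention every tableau with $k\ge1$ is reduced by iterating the recursion down to $y_{0,0,0}$, and the top row is emptied last. I would confirm this convention against small values, e.g.\ $y_{1,0,1}=c_{1,1}$. If the convention instead allowed $k=0$ tableaux, the recurrence would need a separate base row. I expect this bookkeeping, rather than the bijection, to be the delicate part.
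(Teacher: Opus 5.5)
Your three-case decomposition by the position of the maximal entry $N$ is correct, and it is the paper's own argument with the inverse maps spelled out. The three cases are: top row in column $k$; middle row in column $\ell_1+\ell_2$ with no bottom box; and middle row with a bottom box, which gives the factor $N-1$ after standardization.

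The gap is in the boundary bookkeeping: the convention you adopt is wrong. Suppose $y_{0,\ell_1,\ell_2}=0$ for $(\ell_1,\ell_2)\neq(0,0)$.
\begin{itemize}
\item Then $y_{0,0,0}$ never enters the recurrence. The only point whose recurrence refers to it is $(1,0,0)$, and that point lies outside the cone.
\item Consequently $y_{1,1,0}=y_{0,1,0}+y_{1,0,0}=0$ and $y_{1,0,1}=y_{0,0,1}+2y_{1,0,0}=0$.
\item By induction, $y\equiv 0$ for all $k\ge 1$.
\end{itemize}
The true counts are $y_{1,1,0}=y_{1,0,1}=1$, so your own proposed check $y_{1,0,1}=c_{1,1}=1$ would fail. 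Your justification that ``the top row is emptied last'' is backwards. A top box always needs a middle box below it, so $(1,0,0)$ is not a valid shape. Every reduction to the origin therefore ends with a step from $(0,1,0)$ or $(0,0,1)$, and it must pass through the $k=0$ layer at points with $\ell_1+\ell_2\ge 1$.

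The correct reading treats the $k=0$ layer as genuine tableaux with an empty top row; the paper itself uses $y_{0,0,n}=c_{n,0}=(2n-1)!!$. The cone should be read as $\ell_1+\ell_2\ge k\ge 0$ with $\ell_1,\ell_2\ge 0$. The literal ``$k>0$'' reading of the boundary condition is degenerate for the reason just given.

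No separate base row is needed, contrary to your hedge:
\begin{itemize}
\item Your Cases~2 and~3 already prove the recurrence at $k=0$, with Case~1 vacuous (equivalently, $y_{-1,\ell_1,\ell_2}=0$).
\item Each case sends a point of this cone to a point of the cone, since Cases~2 and~3 occur only when $k<\ell_1+\ell_2$.
\end{itemize}
Induction on $N$ then shows that the recurrence extended to $k=0$, with $y_{0,0,0}=1$ and zero values off the cone, gives exactly the tableau counts.
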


\begin{proof}
We decompose a Young tableau counted by $y_{k,\ell_1,\ell_2}$ according to the position of its maximal entry. 
The maximal entry must lie either in the top row or in the middle row.

First suppose that it lies in the top row. Removing this entry decreases the length of the top row by one. 
The resulting shape remains admissible only if the middle row is at least as long as the top row, that is, if $\ell_1 + \ell_2 \ge k$.

Now suppose that the maximal entry lies in the middle row. In this case there are two possible ways to obtain the tableau by extension: either we add a single box to the middle row, or we add a box to the middle row together with an additional box in the bottom row.

Because of the presence of the left and right walls, the entry placed in the bottom box is independent of the remaining entries. Hence it can be chosen arbitrarily among the $2\ell_2 + \ell_1 + k - 1$ values smaller than the maximum. After this choice, we relabel the entries in increasing order so as to obtain a standard Young tableau.

This yields the claimed recurrence relation.
\end{proof}

\subsection{The Pons--Batle Identity in terms of lattice paths}\label{sec:conjinlps}

The recurrence~\eqref{eq:rec_ykl1l2} can be interpreted in terms of lattice paths in three dimensions:
The numbers $y_{k,\ell_1,\ell_2}$ count the weighted lattice paths that start at the origin $(0,0,0)$ and end at $(k,\ell_1,\ell_2)$. 
Each path consists of unit steps $K=(1,0,0)$, $L_1=(0,1,0)$, and $L_2=(0,0,1)$, while never leaving the cone $\ell_1 + \ell_2 \ge k$.

The weight of a path is the product of the weights of its steps, where the steps $K$ and $L_1$ have weight $1$ and the step $L_2$ ending at $(k,\ell_1,\ell_2)$ has weight $2\ell_2 + \ell_1 + k-1$.

Now consider the reformulation~\eqref{eq:ConjPB_YT} of the identity and in particular the two appearing families of Young tableaux enumerated by $y_{n,n-k,k}$ and $y_{k,0,n}$.
Both shapes consist of $2n+k$ boxes, but in terms of lattice paths their lengths differ.
The first one ending at $(n,n-k,k)$ takes $2n$ steps and the second one ending at $(k,0,n)$ takes $n+k$ steps.
Also note that the sum of $L_1$ and $L_2$ steps, which is important for the cone condition, is equal to $n$ and independent of $k$. 

We now reduce the problem to two dimensions by grouping the steps $L_1$ and $L_2$. 
For this purpose we project the step $K$ onto $I = (1,0)$, 
the step $L_1$ onto $J_1=(0,1)$, 
and the step $L_2$ onto $J_2=(0,1)$. 
The steps $I$ and $J_1$ have weight $1$, 
while the weight of $J_2$ depends on the current trajectory. 
Specifically, the weight $w_{i,j,k}$ of the $k$-th $J_2$ step ending at $(i,j)$ is 
\begin{align}
    \label{eq:wijk}
    w_{i,j,k} = i+j+k-1.
\end{align}
Furthermore, the paths start at the origin $(0,0)$ and all its points $(i,j)$ satisfy $i \leq j$, i.e., they always stay above or on the diagonal $y=x$; see Figure~\ref{fig:projectedpath}.

\begin{figure}
  \centering
  \includegraphics[width=.4\linewidth]{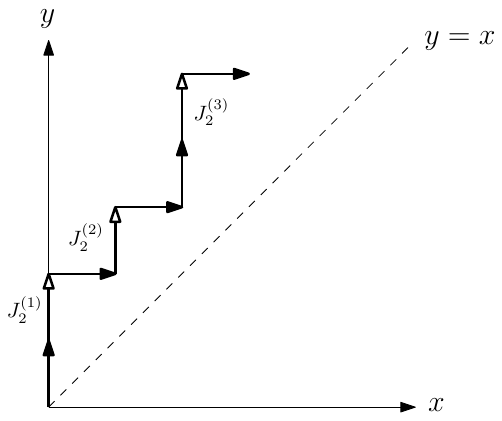}
\caption{A projected path of weight $90$ staying in $y\ge x$. Steps $I$ and $J_1$ have weight $1$. The $k$-th $J_2$-step ending at $(i,j)$ has weight $w_{i,j,k-1}=i+j+k-1$: $w(J_2^{(1)})=2$, $w(J_2^{(2)})=5$, $w(J_2^{(3)})=9$.}
  \label{fig:projectedpath}
\end{figure}

We are interested in two different classes of paths: 
First, the ones enumerated by $y_{n,n-k,k}$ are paths ending at $(n,n)$ that use $k$ steps $J_2$.
By the discussion above, the corresponding counting sequence is $b_{n,k} = y_{n,n-k,k}$.
Second, the ones enumerated by $y_{k,0,n}$ are paths ending at $(k,n)$ that use no steps $J_1$, and therefore only the steps $I$ and $J_2$. 
In particular, the weight of any step $J_2$ ending at $(i,j)$ is $w_{i,j,j} = i + 2j-1$. 
This class is enumerated by $c_{n,k} := y_{k,0,n}$.
Our strategy is now to prove Equation~\eqref{bceq} using these lattice paths.

One key observation here is that Equation~\eqref{bceq} has a very simple equivalent statement in terms of generating functions. 
Note that for $b_{n,k}$ we choose ordinary 
while for $c_{n,k}$ 
exponential generating functions. 
At the level of generating functions, the identity is equivalent to one between \emph{ordinary} and \emph{exponential} generating functions.

\begin{corollary}
We define the ordinary and shifted exponential generating functions
\begin{align} \label{eq:defAkBk}
B_k(z)=\sum_{n\ge 0} b_{n,k} z^{2n},
\qquad
C_k(z)=\sum_{n\ge k} c_{n,k}\frac{z^{\,n-k+1}}{(n-k+1)!}.
\end{align}
For $k \ge 1$, the Pons--Batle Identity~\eqref{eq:Pons-Batle} is equivalent to
\begin{align}\label{eq:relaakBk}
2B_k(z)=z^{2(k-1)} C_k(2z^2).
\end{align}
\end{corollary}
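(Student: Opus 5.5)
The argument is a coefficient comparison: I will expand both sides of \eqref{eq:relaakBk} as power series in $z$, match coefficients, and check that the coefficientwise identity is exactly \eqref{bceq}. Since Theorem~\ref{thm:Pons-Batle} combined with Proposition~\ref{bnk} was already shown to be equivalent to \eqref{bceq}, this gives the claimed equivalence. The notation $b_{n,k}=y_{n,n-k,k}$ and $c_{n,k}=y_{k,0,n}$ is consistent with the original word-class counts, by Lemma~\ref{lem:PBwords-legs} and the tableau reformulation of $\mathcal{B}_{n,k}$.

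\textbf{Expanding the right-hand side.} Substituting $2z^2$ into $C_k$ gives
\[
z^{2(k-1)}C_k(2z^2)=\sum_{n\ge k} c_{n,k}\,\frac{2^{\,n-k+1}}{(n-k+1)!}\,z^{2(n-k+1)+2(k-1)}=\sum_{n\ge k}\frac{2^{\,n-k+1}c_{n,k}}{(n-k+1)!}\,z^{2n}.
\]
The exponent collapses to $2n$, so the coefficient of $z^{2n}$ on the right is $2^{n-k+1}c_{n,k}/(n-k+1)!$. This is the key bookkeeping step.

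\textbf{Expanding the left-hand side.} The coefficient of $z^{2n}$ in $2B_k(z)$ is $2b_{n,k}$. Both series contain only even powers of $z$, so \eqref{eq:relaakBk} holds if and only if
\[
2b_{n,k}=\frac{2^{n-k+1}}{(n-k+1)!}\,c_{n,k}\qquad\text{for all } n\ge k.
\]
Dividing by $2^{n-k+1}$, this is exactly \eqref{bceq}.

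\textbf{Boundary range $n<k$.} There $b_{n,k}=0$: the endpoint $(n,n-k,k)$ would have $\ell_1=n-k<0$, which lies outside the admissible region and so counts no paths. The right-hand side also has no $z^{2n}$ term for $n<k$, so these coefficients agree trivially.

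\textbf{Translating to $\TC_{n,k}$.} For $n\ge k$, the identity \eqref{bceq} with $n$ replaced by $n-1$ says
\[
\frac{n!}{2^{n-k-1}}\,b_{n-1,k}=\frac{n!}{(n-k)!}\,c_{n-1,k}.
\]
By Proposition~\ref{bnk} the left-hand side equals $\TC_{n,k}$, so this is precisely \eqref{eq:Pons-Batle}. Conversely, \eqref{eq:Pons-Batle} together with Proposition~\ref{bnk} gives back \eqref{bceq}. The hypothesis $k\ge1$ guarantees that $z^{2(k-1)}$ is a polynomial and keeps the index shift clean.

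\textbf{Expected difficulty.} No step is a genuine obstacle; the main risk is index bookkeeping. One has to check that the shift $n\mapsto n-1$ between \eqref{eq:Pons-Batle} and \eqref{bceq} matches the ranges $n>k$ and $n\ge k$. One also has to use the same convention for $b_{n,k}$ in both places: as $y_{n,n-k,k}$ in the generating function and as the word count in Proposition~\ref{bnk}.
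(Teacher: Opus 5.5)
Your proof is correct and takes the same route the paper intends. The paper gives no written proof, only the remark that \eqref{bceq} has a simple generating-function equivalent; you supply exactly that argument. Comparing coefficients of $z^{2n}$ reduces \eqref{eq:relaakBk} to \eqref{bceq}, which by Proposition~\ref{bnk} is the Pons--Batle Identity after the shift $n\mapsto n-1$, and you also handle the vacuous range $n<k$.
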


Our goal is now to inductively compute closed-form expressions for these generating functions that will allow to prove~\eqref{eq:relaakBk}.
We first discuss the case $k=0$, which has a slightly different form than \eqref{eq:relaakBk}, yet is well known and easy to prove.

\subsection{Case \texorpdfstring{$k=0$}{k=0} of \texorpdfstring{$B_k(z)$}{Bk(z)} and \texorpdfstring{$C_k(z)$}{Ck(z)}}
For $k=0$, the two-dimensional interpretation immediately shows that
\begin{align*}
b_{n,0}&=\frac{1}{n+1}\binom{2n}{n} &
\text{ and } &&
c_{n,0}&=(2n-1)!!,
\end{align*}
since all these paths are classical Dyck paths of length $2n$, counted by the Catalan numbers, and the weights satisfy $w_{0,k,k} = 2k-1$ (with $(-1)!!=1$).
From these closed forms, we directly see that the Pons--Batle Identity~\eqref{eq:Pons-Batle} holds for $k=0$:
\begin{align}
    \label{eq:a0nb0n}
\frac{b_{n,0}}{2^{n}}
= \frac{(2n)!}{(n+1)!\,n!\,2^{n}}
= \frac{(2n-1)!!}{(n+1)!}
= \frac{c_{n,0}}{(n+1)!}.
\end{align}
In terms of generating functions we have
$B_0(z)=\sum_{n\ge 0} b_{n,0} z^{2n}$ 
and
$C_0(z)=\sum_{n\ge 0} c_{n,0}\frac{z^{n+1}}{(n+1)!}.$
It is well-known that they possess the following closed forms:
\begin{align*}
B_0(z) &= \frac{1-\sqrt{1-4z^2}}{2z^2}&
\text{ and } &&
C_0(z) &= 1-\sqrt{1-2z}.
\end{align*}
Then the Pons--Batle Identity~\eqref{eq:Pons-Batle} for $k=0$ is equivalent to the easily verifiable equation
\[
2z^2B_0(z)=C_0(2z^2).
\]

\begin{remark}[Bijection on binary trees]\label{rem:bijectioncase0}
In this case, we can give a bijective interpretation of~\eqref{eq:a0nb0n} between plane binary trees (Catalan numbers) and leaf-labeled non-plane binary trees (odd double factorials), both with $n$ internal nodes and $n+1$ leaves.
The idea is to embed both structures into plane leaf-labeled binary trees. 
On the one hand, in plane trees the leaves are not labeled. Due to left-right order of the children there are $(n+1)!$ different ways to label the leaves.
On the other hand, in non-plane trees, we may choose a left-right-order of the children of each internal node, which can be done in $2^n$ many ways.
\end{remark}

\section{The generating function \texorpdfstring{$B_k(z)$}{Bk(z)} for \texorpdfstring{$k\geq 1$}{k>=1}} \label{GF of BK}
\newcommand{\Mup}{M_{up}}

We now focus on lattice paths associated with model $B$, enumerated by $B_k(z)$.
First, we transform it into the following bicolored Dyck path model (see Figure~\ref{fig:decomposition_Fk}): 
It consists of down steps $D = (1,-1)$ and two kinds of up steps, $U_1=(1,1)$ and $U_2=(1,1)$, corresponding to $I$, $J_1$, and $J_2$, respectively.
The weight of the steps $U_1$ and $D$ is $1$ each, while the weight of the step $U_2$ depends on the current trajectory: If this step is the $m$-th step in the path and it is the $k$-th step $U_2$ then its weight is 
\begin{align*}
    \hat{w}_{m,k} = m+k-1,
\end{align*}
which is a direct translation of the weight~\eqref{eq:wijk}.
We recall that the ordinary generating function of Dyck paths counted by length is
\begin{align*}
D(z):=B_0(z)=1+z^2+2z^4+5z^6+14z^8+\cdots,
\end{align*}
which is the unique power series solution of $D(z)=1+z^2D(z)^2$.
It is convenient to introduce
\begin{align} \label{eq:EshorthandDyck} 
E(z) = z D(z) = \frac{1-\sqrt{1-4z^2}}{2z}, \end{align} 
which is the unique power series solution of $E = z(1+E^2)$.

We will construct paths recursively with respect to the number $k$ of $U_2$-steps. 
We define the bivariate generating function 
\begin{align*}
    F_k(z,u) = \sum_{n,\ell \geq 0} f_{k,n,\ell} z^n u^{\ell},
\end{align*}
where $f_{k,n,\ell}$ is the number of bicolored Dyck meanders with $k$ colored $U_2$-steps such that the last step is a $U_2$-step that ends at level $\ell$ after $n$ steps.
As we will need this construction repeatedly, let $M_{\ell}(z)$ denote the generating function of Dyck meanders ending at level $\ell$.
By a standard last-passage decomposition, we have
\[
M_{\ell}(z)=D(z)\,E(z)^{\ell}.
\]

We further introduce the bivariate generating function $\Mup(z,u)$ for Dyck meanders
followed by an up step: the variable $z$ marks the length (number of steps) and
$u$ marks the final height. A meander ending at height $\ell$, followed by an up step, reaches height $\ell+1$ and contributes a factor $zu$.
Thus
\begin{align*}
    \Mup(z,u) 
        = \sum_{\ell \geq 0} M_{\ell}(z) z u^{\ell+1}
        = \sum_{\ell \geq 0} (uE(z))^{\ell+1}
        = \frac{uE(z)}{1-uE(z)}.
\end{align*}

\begin{figure}
  \centering
  \includegraphics[width=1\linewidth]{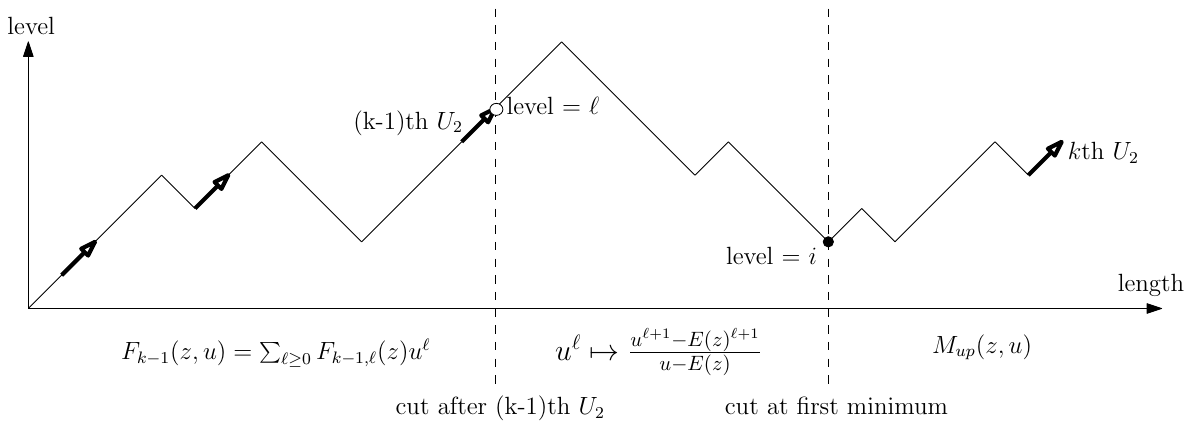}
\caption{Decomposition used to obtain $F_k(z,u)$ from $F_{k-1}(z,u)$. 
Cut after the $(k-1)$th $U_2$ step and at the first minimum between the $(k-1)$th and the $k$th $U_2$ step. 
The three parts correspond to $F_{k-1}(z,u)$, the operator $u^\ell \mapsto \frac{u^{\ell+1}-E(z)^{\ell+1}}{u-E(z)}$ (with $E(z)=zD(z)$), and a suffix $\Mup(z,u)$. 
The weight of the last $U_2$ step is implemented by applying $\frac{1}{z^{k-2}}\frac{d}{dz}z^{k-1}$ to the resulting generating function.}
  \label{fig:decomposition_Fk}
\end{figure}

With these ingredients we are ready to state our main result:
\begin{theorem}\label{theo:recursiveAk}
    The generating function $F_{k}(z,u)$ of bicolored Dyck meanders with $k$ colored $U_2$ steps ending with a $U_2$ step where $z$ marks the number of steps and $u$ the final level satisfies for $k \geq 1$
    \begin{align*}
        F_{k}(z,u) 
            &= \frac{1}{z^{k-2}}\frac{d}{dz} \left( z^{k-1} \frac{u F_{k-1}(z,u) -  E(z) F_{k-1}(z,E(z))}{u -  E(z)} \Mup(z,u) \right),
    \end{align*}
    with $F_0(z) = 1$. 
    The generating function $B_k(z)$ is then for $k \geq 0$ given by
    \begin{align*}
        B_k(z) 
            &= F_{k}(z, E(z)) D(z).
    \end{align*}
\end{theorem}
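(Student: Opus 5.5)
The plan is to prove the recursion by the path decomposition of Figure~\ref{fig:decomposition_Fk}. The weights are handled by a generating-function operator that acts on the length variable.

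\emph{Decomposition.} Fix a bicolored meander counted by $f_{k,n,\ell}$, so it has $k$ steps $U_2$ and ends with its $k$-th $U_2$ step. For $k\ge 1$ I cut it at two places. The first cut is right after the $(k-1)$-th $U_2$ step (for $k=1$, at the origin, which gives the factor $F_0=1$). This prefix is counted by $F_{k-1}(z,u)$ with final height $h$. The remaining segment has no $U_2$ steps before its final step, so it is a Dyck meander over $\{U_1,D\}$ that stays at height $\ge 0$, followed by one up step. The second cut is at the first minimum of this segment, at height $m\le h$. The part from height $h$ down to its first visit to the minimum $m$ consists of a sequence of $h-m$ "first-descent" factors, each counted by $E(z)$ (an arched Dyck path followed by $D$, i.e.\ $zD(z)$). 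After the minimum, the path never goes below level $m$, so relative to $m$ it is a meander followed by an up step, which gives $\Mup(z,u)$, where $u$ records the height above $m$.

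The factor $u^m$ is then multiplied by $\Mup$ to restore the absolute final height. Summing over $0\le m\le h$ turns $u^h$ into $\sum_{m=0}^{h} E^{h-m}u^m=\frac{u^{h+1}-E^{h+1}}{u-E}$. Applied linearly to $F_{k-1}$, this gives the middle factor $\frac{uF_{k-1}(z,u)-EF_{k-1}(z,E)}{u-E}$. The decomposition is bijective because the first minimum is unique and the factorization of a descending path into first-passage pieces is unique.

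\emph{Weights.} So far every step has weight one except the final $U_2$ step. If that step is the $n$-th step of the path and the $k$-th $U_2$ step, its weight is $n+k-1$. Take the unweighted generating function $G=\sum g_n z^n$ of the concatenation, where $z^n$ already includes the final step. Then we need $\sum (n+k-1)g_nz^n$. Since $z^{-(k-2)}\frac{d}{dz}(z^{k-1}G)=\sum (n+k-1) g_n z^{n}$, this is exactly the stated operator. The earlier $U_2$ weights depend only on their own position and index, and these are unchanged by concatenating on the right, so they are already accounted for inside $F_{k-1}$. I also need to check that the operator is applied to the full $z$-dependence, including $E(z)$, $\Mup$ and the $z$ inside $F_{k-1}(z,E(z))$. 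This is fine, because the operator acts termwise on coefficients of $z^n u^\ell$, with $u$ treated as a constant.

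\emph{Formula for $B_k$.} A path counted by $b_{n,k}$, viewed as a bicolored Dyck path of length $2n$, either has $k=0$, giving $D(z)$, or splits uniquely after its last $U_2$ step at height $h$. The remaining $U_2$-free suffix goes from height $h$ to $0$ while staying nonnegative. By the same first-passage argument as above, together with a final Dyck path, this suffix is counted by $E^hD$. Substituting $u=E(z)$ then gives $B_k=F_k(z,E)D$.

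\emph{Main obstacle.} I expect the main difficulty to be the correct matching of weights with the translation from the 2D model. One must check that $\hat w_{m,k}=m+k-1$ really equals $w_{i,j,k}$ from \eqref{eq:wijk}, with the index conventions $i+j=m$ and the $k$ versus $k-1$ offset. One must also check that $F_{k-1}$ records the weights of the previous $U_2$ steps correctly, so that it is legitimate to differentiate the product.
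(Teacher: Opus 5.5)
Your proposal is correct and follows essentially the same argument as the paper. It uses the same two cuts (after the $(k-1)$-th $U_2$ step and at the first minimum before the $k$-th), the same first-passage factors $E^{h-m}$ summed into $\frac{u^{h+1}-E^{h+1}}{u-E}$, the operator $z^{-(k-2)}\frac{d}{dz}z^{k-1}$ for the weight $n+k-1$ of the final step, and the closing first-passage decomposition $B_k=F_k(z,E(z))D(z)$. Your check that the operator may act on the whole product, including the $z$ inside $E(z)$ and $F_{k-1}(z,E(z))$, is a point the paper leaves implicit. Your worry about matching $\hat w_{m,k}$ with $w_{i,j,k}$ resolves immediately, since $m=i+j$ is the number of steps taken.
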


\begin{proof}
For $k=1$, a bicolored meander counted by $F_1(z,u)$ is a Dyck meander followed by an up step,
where this last up step is colored ($U_2$) and carries weight equal to the path length.
 This is obtained from $\Mup(z,u)$ by the standard pointing operator
$z\frac{d}{dz}$, hence
\begin{align}\label{eq:F1}
    F_1(z,u) 
        = z \frac{d}{dz} \Mup(z,u)
        = \frac{z uE'(z)}{(1-uE(z))^2}.
\end{align}

Assume $k\ge 1$. We compute $F_{k}(z,u)$ from $F_{k-1}(z,u)$ as follows.
Given a path with $k$ steps $U_2$, we cut the path into three parts at the following two points (see Figure~\ref{fig:decomposition_Fk}):
after the $(k-1)$th step~$U_2$, and at the first minimum between the $(k-1)$th and $k$th step~$U_2$.
Now we discuss each part separately. 
The first part is enumerated by $F_{k-1}(z,u)$, a path from the origin to a level $\ell$ marked by the variable $u$.
The second part starts at level $\ell$ and then runs to a level $0 \leq i \leq \ell$ ending with a down step if it is nonempty. 
We will discuss it below.
The third part is then a meander that starts from level $i$ and ends with a step $U_2$.
Therefore, the third part is modeled by a factor $\Mup(z,u)$ in which the weight of the last step has to be adjusted.
This weight is the length of the path plus $k-1$ which can be computed by the operator $\frac{1}{z^{k-2}}\frac{d}{dz} z^{k-1} (\cdot)$ applied to the full generating function. 
Indeed, for any monomial~$z^n$, 
\begin{align*}
\frac{1}{z^{k-2}}\frac{d}{dz}\!\left(z^{k-1}z^n\right) = (n+k-1)\,z^n,
\end{align*}
so the operator multiplies each contribution by the path length plus $k-1$.

For the second part, we use a last-passage-decomposition by cutting each time the path attains a new lower level for the first time.
Between levels the path is a classical Dyck path associated with $D(z)$ and the first step to a lower level is a step $D$ associated with $z$. 
Thus, a path from level $\ell$ to level $i$ corresponds to $(zD(z))^{\ell-i}$.
Finally, we sum over all possible levels $0 \leq i \leq \ell$ of the minimum.
In terms of operators, we replace the weight $u^{\ell}$ of a path ending at level $\ell$ as follows (recall that $E(z) = z D(z)$):
\begin{align*}
    u^{\ell} 
        &\mapsto u^{\ell} + E(z)u^{\ell-1} + E(z)^2 u^{\ell-2} + \dots + E(z)^{\ell} 
        =  \frac{u^{\ell+1} -  E(z)^{\ell+1}}{u -  E(z)}.
\end{align*}
Finally, combining all three parts, this gives the claimed recurrence for $F_{k}(z,u)$.
In order to compute $B_{k}(z)$ we perform another first-passage decomposition at the end to let the walk run from altitude $\ell$ back to $0$. 
In terms of generating functions this gives
$
    B_k(z) = F_{k}(z, E(z)) D(z),
$
where the final factor $D(z)$ corresponds to a possibly empty Dyck path at the end of the walk.
Note that setting $F_{0}(z,u)=1$ this also gives the correct result for $k=0$ and $k=1$.
\end{proof}

We compute $F_k$ and $B_k$ efficiently by expressing them in $E$, using the algebraic equation $E = z(1+E^2)$. In particular, it allows us to express $z$ in terms of $E$ such that 
$
    z = \frac{E}{1+E^2}.
$
Thereby, we are able to compute $B_k(z)$ directly on our computer for all $k \leq 250$ using Maple. 
The first generating functions are then given by
\begin{align*}
	B_0(z) &= 1+E^2 = \frac{E}{z} = D = 1+{z}^{2}+2\,{z}^{4}+5\,{z}^{6}+O( {z}^{8}) , & (\oeis{A000108})
           \\
	B_1(z) &= \frac{E^2(1+E^2)^2}{(1-E^2)^3} = {z}^{2}+7\,{z}^{4}+38\,{z}^{6}+O( {z}^{8} ) ,& (\oeis{A000531})
                 \\
	B_2(z) &= \frac{E^4(1+E^2)^2 (8 E^4+15E^2+7)}{(1-E^2)^7} = 7\,{z}^{4}+106\,{z}^{6}+1010\,{z}^{8}+O( {z}^{10} ). &
\end{align*}
Using Theorem~\ref{theo:recursiveAk}, one can show that all $B_k \in \mathbb{Q}(z, E)$ for all $k \geq 0$, where $\mathbb{Q}(z,E)$ is an algebraic extension of degree $2$ of the rational function field $\mathbb{Q}(z)$. Hence, all $B_k$ are algebraic functions of degree $2$.

\section{The generating function \texorpdfstring{$C_k(z)$}{Ck(z)} for \texorpdfstring{$k \geq 1$}{k>=1}}

Recall that $c_{n,k}$ counts paths from $(0,0)$ to $(k,n)$ using steps $I=(1,0)$ and $J_2=(0,1)$ that never cross the line $y=x$.
Hence, the initial conditions $c_{n,k}=0$ for $n<k$ follow immediately.
Now, we remove again the last step from the path.
This gives the recurrence for $n \geq k$
\begin{align*}
    c_{n,k} = c_{n,k-1} + (2n+k-1)c_{n-1,k},
\end{align*}
since the weight~\eqref{eq:wijk} of the step $J_2$ is $w_{k,n,n}=2n+k-1$.
Recall that $C_k(z)$ denotes the shifted exponential generating function defined in~\eqref{eq:defAkBk}.
For $k\ge 1$, $C_k(z)$ satisfies the differential equation with initial condition $C_k(0)=0$:
\begin{align}   \label{eq:odeBbar}
(1-2z)C_k'(z)-(3k-1)C_k(z)=C_{k-1}''(z).
\end{align}

For $k=0$ we have
$
    C_0(z) = 1 - \sqrt{1-2z},
$
which satisfies $C_0(0)=0$.
Note that for $k=0$ the differential equation~\eqref{eq:odeBbar} is also satisfied if we set
${C}_{-1}''(z) = 1$, i.e., for instance ${C}_{-1}(z) = \frac{z^2}{2}$.

\begin{proposition}\label{prop:BkODE}
For $k \ge 1$, the generating function $C_k(z)$ satisfies
\begin{align}
\label{eq:barbkopgf}
\prod_{j=3k-1}^{4k-1}(T-j){C}_k(z)=0,   
\end{align}
where the operator $T$ is defined by $T=(1-2z) \frac{d}{dz}$. Moreover, it can be written as
\begin{align}
\label{eq:barbkgf}
{C}_k(z)=\sum_{i=0}^{k} \gamma_{i,k} (1-2z)^{-\frac{i+3k-1}{2}},    
\end{align}
where 
\begin{align*}
\gamma_{i,k} = \frac{(i+3k-3)(i+3k-5)}{i}\gamma_{i-1,k-1} \qquad \text{for $1 \leq i \leq k$}
\end{align*}
with the initial terms $\gamma_{0,1}=-1$ and $\gamma_{1,1}=1$,
and $\gamma_{0,k} = -\sum^{k}_{i=1}\gamma_{i,k}$.
\end{proposition}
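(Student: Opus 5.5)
The plan is to prove the explicit form~\eqref{eq:barbkgf} first, by induction on $k$, using the differential equation~\eqref{eq:odeBbar}. The annihilator statement~\eqref{eq:barbkopgf} will then follow. The basic observation is that the functions $(1-2z)^{-a/2}$ are eigenfunctions of $T=(1-2z)\frac{d}{dz}$. Indeed,
\[
T\,(1-2z)^{-a/2} = (1-2z)\cdot a\,(1-2z)^{-a/2-1} = a\,(1-2z)^{-a/2}.
\]
So the term $(1-2z)^{-(i+3k-1)/2}$ has eigenvalue $i+3k-1$, and for $0\le i\le k$ these eigenvalues are exactly $3k-1,\dots,4k-1$. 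The factors $T-j$ commute, since they are polynomials in the single operator $T$. Hence $\prod_{j=3k-1}^{4k-1}(T-j)$ kills each summand of~\eqref{eq:barbkgf}, which gives~\eqref{eq:barbkopgf}.

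For~\eqref{eq:barbkgf} itself, rewrite~\eqref{eq:odeBbar} as $(T-(3k-1))C_k = C_{k-1}''$. Differentiating twice gives
\[
\frac{d^2}{dz^2}(1-2z)^{-a/2}=a(a+2)(1-2z)^{-(a+4)/2}.
\]
Assume inductively that $C_{k-1}=\sum_{i=0}^{k-1}\gamma_{i,k-1}(1-2z)^{-(i+3k-4)/2}$. Applying the formula with $a=i+3k-4$ yields
\[
C_{k-1}''=\sum_{i=0}^{k-1}\gamma_{i,k-1}(i+3k-4)(i+3k-2)\,(1-2z)^{-((i+1)+3k-1)/2}.
\]
Shift the index to $i'=i+1\in\{1,\dots,k\}$. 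A particular solution is then obtained term by term: $(T-(3k-1))$ multiplies the $i'$-th term by $i'$. Matching coefficients gives
\[
i'\gamma_{i',k}=(i'+3k-5)(i'+3k-3)\gamma_{i'-1,k-1},
\]
which is the stated recursion. The kernel of $T-(3k-1)$ is spanned by $(1-2z)^{-(3k-1)/2}$, the $i=0$ term. Its coefficient $\gamma_{0,k}$ is forced by $C_k(0)=0$, and this gives $\gamma_{0,k}=-\sum_{i\ge1}\gamma_{i,k}$. Uniqueness holds because~\eqref{eq:odeBbar} is a first-order linear ODE whose leading coefficient $1-2z$ is nonzero at $0$, so it has exactly one power series solution with $C_k(0)=0$. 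That solution must therefore be the one we constructed.

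The main obstacle is the base case, because $C_0=1-\sqrt{1-2z}$ is not of the form~\eqref{eq:barbkgf}: it has a constant term and exponent $+1/2$. So I would treat $k=1$ directly. We have $C_0''=(1-2z)^{-3/2}$, and $(T-2)(1-2z)^{-3/2}=(1-2z)^{-3/2}$, so $\gamma_{1,1}=1$, and then $\gamma_{0,1}=-1$ from $C_1(0)=0$. This gives $C_1=(1-2z)^{-3/2}-(1-2z)^{-1}$. The induction then starts from $k=1$, where the representation holds, and runs for $k\ge2$. The only remaining care is the index bookkeeping: the exponents for level $k-1$ must shift by exactly one index slot after two differentiations, which the computation above confirms since $(i+3k-4)+4=(i+1)+3k-1$.
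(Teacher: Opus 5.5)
Your proof is correct, but it runs in the opposite order from the paper's and rests on a different key step.

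\textbf{The paper's route.} It first proves the annihilator \eqref{eq:barbkopgf} by a purely operator-theoretic induction. Writing $D=\frac{d}{dz}$, it establishes the commutation rule $D^2P(T)=P(T-4)D^2$ for polynomials $P$. Applying $D^2$ to $Q_{k-1}(T)C_{k-1}=0$ and using $D^2C_{k-1}=(T-(3k-1))C_k$ then gives $Q_{k-1}(T-4)(T-(3k-1))C_k=0$. This is exactly $\prod_{j=3k-1}^{4k-1}(T-j)C_k=0$. Only afterwards does the paper write $C_k$ as a combination of the eigenfunctions $(1-2z)^{-j/2}$ and match coefficients in the ODE. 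That coefficient matching, including fixing $\gamma_{0,k}$ via $C_k(0)=0$, is the same computation you perform.

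\textbf{Your route.} You build the explicit solution by induction from $k=1$. You then invoke uniqueness of the power-series solution of the first-order ODE with $C_k(0)=0$. The annihilator follows at once because each summand is an eigenfunction of $T$.

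\textbf{Comparison.} The paper's approach produces the order-$(k+1)$ annihilating operator without solving anything, and the commutation rule is a reusable tool. However, passing from the annihilator to the explicit form requires knowing that the kernel of $\prod_j(T-j)$ near $0$ is spanned by the $k+1$ eigenfunctions. This needs a small dimension argument (each $T-j$ has a one-dimensional kernel and the eigenvalues are distinct), which the paper asserts without detail. Your argument bypasses that step entirely and is more elementary and self-contained. The uniqueness of the first-order initial-value problem is precisely what guarantees that the function you construct is $C_k$.
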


Recall that in Identity~\eqref{eq:relaakBk} we need to compute $C_k(2z^2)$ which therefore consists of powers of $\sqrt{1-4z^2}$. Using the algebraic equation $E = z(1+E^2)$ this square root satisfies 
\[
  \sqrt{1-4z^2} = \frac{1-E^2}{1+E^2},
\]
which allows us to verify the identity for computed values of $k$ by checking whether the two rational functions (in $E$) for $B_k$ and $C_k$ coincide.
However, currently we do not know how to prove this result for arbitrary $k$.

The first few terms of $C_k(2z^2)$ are given by
\begin{align*}
	C_0(2z^2) &= 1-(1-4z^2)^{1/2} \\
	       &= 2\,{z}^{2}+2\,{z}^{4}+4\,{z}^{6}+10\,{z}^{8}+O ( {z}^{10}), 
            & (\oeis{A284016})
           \\
	C_1(2z^2) &= (1-4z^2)^{-3/2}-(1-4z^2)^{-1} \\
	       &= 2\,{z}^{2}+14\,{z}^{4}+76\,{z}^{6}+374\,{z}^{8}+O( {z}^{10}),
           & (\oeis{A172060}) 
           \\
	C_2(2z^2) &= \frac{15}{2} (1-4z^2)^{-7/2} - 8 (1-4z^2)^{-3} + \frac{1}{2} (1-4z^2)^{-5/2}\\
	       &= 14\,{z}^{2}+212\,{z}^{4}+2020\,{z}^{6}+15480\,{z}^{8}+O ( {z}^{10}).
\end{align*}
Using Maple, we can easily check that the identity holds for $k \le 250$.

\section{Limit laws of Young tableaux with walls and \texorpdfstring{$k=1$}{k=1} extra cell}

Except for $k=0$ (see Remark~\ref{rem:bijectioncase0}), no bijection is known between the objects enumerated by $b_{n,k}$ and $c_{n,k}$. We begin by analyzing parameters of Young tableaux with walls for $k=1$.
In this section, we study two parameters for
$b_{n,1}$ (i.e., tableaux of shape $y_{n,n-1,1}$) and one parameter for
$c_{n,1}$ (i.e., tableaux of shape $y_{1,0,n}$).

\subsection{Position of the unique cell in the bottom row in \texorpdfstring{$\Bc_{n,1}^*$}{B*\_{n,1}}}

A first natural parameter in tableaux of the class $\Bc^*_{n,1}$ is the horizontal position or column number of the unique cell in the bottom row.
These tableaux of width $n$ are enumerated by $b_{n,1} = y_{n,n-1,1}$.
Under the bijection with bicolored Dyck paths (Sections~\ref{sec:conjinlps},~\ref{GF of BK}), this bottom cell and its middle-row neighbor correspond to the unique $U_2$ up-step; all other middle-row cells correspond to $U_1$ steps.
Therefore, the bottom cell is in column $m$ if and only if the $U_2$-step is the $m$-th up step in the Dyck path.

Let $G(z,x)=\sum_{n,m\ge 0} g_{n,m} z^{n}x^{m}$, where $g_{n,m}$ counts Dyck paths of length $n$ with the $U_2$ step at the $m$-th up-step. Clearly, $G(z,1)=A_1(z)$.

\begin{proposition}\label{prop:GFGzxBcol}
    The generating function $G(z,x)$ is algebraic of degree $4$ given by
    \begin{align*}
        G(z,x) &= 2\,{\frac {1-\sqrt {1-4\,x{z}^{2}}}{\sqrt {1-4\,x{z}^{2}} 
                    \left( \sqrt {1-4\,{z}^{2}}+\sqrt {1-4\,x{z}^{2}} \right) ^{2}}}
               \\&= 
               x{z}^{2}+ \left( 5\,{x}^{2}+2\,x \right) {z}^{4}+ \left( 21\,{x}^{3}+
                12\,{x}^{2}+5\,x \right) {z}^{6}+ O ( {z}^{8} ) 
            .
    \end{align*}
\end{proposition}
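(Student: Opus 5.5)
The plan is a direct first-passage decomposition of the bicolored Dyck paths for $k=1$, with an extra variable $x$ marking up steps. By the discussion before the statement, $g_{n,m}$ is the total weight of Dyck paths of length $n$ with exactly one coloured up step $U_2$, which is the $m$-th up step. By Theorem~\ref{theo:recursiveAk} (case $k=1$, see \eqref{eq:F1}), this $U_2$ carries weight $\hat w_{p,1}=p$, its position among all steps. So $x$ should mark every up step of the prefix, including $U_2$, and no step after it.

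\emph{Step 1 (prefix with $x$).} Cut the path right after $U_2$. The prefix is a Dyck meander ending at some level $\ell$, followed by $U_2$. If each up step of the meander carries weight $x$, the standard last-passage decomposition gives
\[
D(\sqrt{x}z)\,\bigl(\sqrt{x}\,E(\sqrt{x}z)\bigr)^{\ell}
\]
for this meander. Here a Dyck excursion with up steps marked is $D(\sqrt{x}z)$, and each rising step contributes $xz$ times such an excursion. Appending $U_2$ gives a factor $xzu$. Summing over $\ell$,
\[
P(z,u,x)=\frac{xz\,u\,D(\sqrt{x}z)}{1-u\sqrt{x}E(\sqrt{x}z)}.
\]
The weight of $U_2$ equals the prefix length, so it is produced by $z\partial_z P$. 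The derivative must act only on the prefix, which is why I keep the suffix as a separate factor.

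\emph{Step 2 (suffix).} After $U_2$ the path is at level $\ell+1$ and must return to $0$ with unmarked steps. This contributes $E(z)^{\ell+1}D(z)$, exactly as in the formula for $B_k$ in Theorem~\ref{theo:recursiveAk}. Hence
\[
G(z,x)=D(z)\,\bigl[z\partial_z P(z,u,x)\bigr]_{u=E(z)},
\]
where $u$ is treated as a constant during differentiation and substituted afterwards.

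\emph{Step 3 (simplification).} Next I substitute the closed forms $E(z)=\frac{1-\sqrt{1-4z^2}}{2z}$ and $\sqrt{x}E(\sqrt{x}z)=\frac{1-\sqrt{1-4xz^2}}{2z}$. I also use $D=E/z$ and the derivative identity $E'=E/(z\sqrt{1-4z^2})$, together with its $x$-analogue. With $s=\sqrt{1-4z^2}$ and $t=\sqrt{1-4xz^2}$, the key simplification is
\[
1-E(z)\sqrt{x}E(\sqrt{x}z)=\frac{(s+t)(1-\dots)}{\dots}.
\]
This should yield the squared denominator $(s+t)^2$ and the factor $t$ coming from the derivative. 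The quadratic relations $E=z(1+E^2)$ (and its analogue) let me clear the remaining terms. The result lies in $\mathbb{Q}(z,x)(s,t)$, which is a degree-$4$ extension, so $G$ is algebraic of degree $4$, assuming it does not collapse to a subfield. I would rule that out by checking that $G$ is not invariant under $t\mapsto -t$. As sanity checks, I compare the first coefficients $xz^2+(5x^2+2x)z^4+\cdots$ and check that $x=1$ recovers $B_1$.

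The main obstacle is Step 3: getting the algebra into exactly the stated compact form. In particular, the factor $1-\sqrt{1-4xz^2}$ in the numerator must absorb $D(\sqrt{x}z)$, and the differentiation must produce the clean $(s+t)^{-2}t^{-1}$ structure. I would do this by rationalizing in $s$ and $t$, and if needed by verifying the final identity as a polynomial identity after clearing square roots.
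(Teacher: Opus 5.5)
Your proposal is correct and is essentially the paper's proof: your $P(z,u,x)$ is exactly $M_{up}(z\sqrt{x},u\sqrt{x})$, and applying $z\partial_z$, setting $u=E(z)$ and multiplying by $D(z)$ is the paper's route; Step~3 closes cleanly, since $a:=\sqrt{x}E(\sqrt{x}z)=\frac{1-t}{2z}$ satisfies $z\,\partial_z a=a/t$ and $1-E(z)\,a=\frac{(1-s)(s+t)}{4z^2}$, which gives $G=\frac{2(1-t)}{t(s+t)^2}$ at once. One small correction: non-invariance under $t\mapsto -t$ only shows $G\notin\mathbb{Q}(z,x)(s)$, so for degree exactly $4$ you must also check that $G$ is moved by $s\mapsto -s$ and by $(s,t)\mapsto(-s,-t)$, both of which are immediate from this closed form.
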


\begin{proof}
    We revisit the construction of $A_1(z)$ and $F_1(z,u)$ in \eqref{eq:F1} to track the number of up-steps occurring up to and including the unique $U_2$-step. We decompose such a path at this $U_2$-step: the prefix (represented by $M_{up}(z,u)$) is a Dyck meander ending at level $\ell$ with a $U_2$-step, followed by a suffix that returns from level $\ell$ to $0$.

    Suppose the prefix has length $n$, ends at level $\ell$, and contains $m$ up-steps. Since the number of down-steps is $m-\ell$, the total length is $n = m + (m-\ell) = 2m-\ell$, which implies $m = \frac{n+\ell}{2}$. To track these up-steps, we apply the substitution $z \to zx^{1/2}$ and $u \to ux^{1/2}$ to $M_{up}(z,u)$. 

    The $U_2$-step is accounted for by the operator $z \partial_z$ to fix its weight. Finally, substituting $u \to zD(z)$ for the return path and multiplying by $D(z)$ yields $G(z,x)$, where elementary simplification provides the final result.
\end{proof}

This generating function allows us to define the random variable $X_n$ of the column index of the unique bottom cell for a tableaux from $\Bc^*_{n,1}$ drawn uniformly at random as follows:
\begin{align*}
    \mathbb{P}(X_n=m) = \frac{[z^{2n} x^m]G(z,x)}{[z^{2n}]G(z,1)}.
\end{align*}

The limit distribution of $X_n$ is given below; on average, the unique cell lies at column $\frac{2}{3}n$ with standard deviation $\frac{\sqrt{2}}{6} n \approx 0.236 n$. For related asymptotic results, see~\cite{Durand2004,Banderieretal2012Nongaussian}.

\begin{theorem}
    The random variable $X_n$ of the distribution of the column index of the unique cell in the bottom row in $\mathcal{B}^*_{n,1}$ converges after rescaling by $n$ in law with convergence of all moments to a random variable $X$ distributed like a $\operatorname{Beta}(2,1)$ distribution:
    \begin{align*}
        \frac{X_n}{n} &\stackrel{d}{\to} X  \qquad \text{ where } \qquad
        X \stackrel{d}{=}\operatorname{Beta}(2,1).
    \end{align*}
    The probability density function of $X$ is given by $f_X(x) = 2x$ for $x \in [0,1]$ and is equal to $0$ otherwise. 
    The moments are given by $\E(X^r) = \frac{2}{r+2}$.
\end{theorem}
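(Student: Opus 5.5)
We prove the theorem by the method of moments, starting from the closed form of $G(z,x)$ in Proposition~\ref{prop:GFGzxBcol}. Since $\mathrm{Beta}(2,1)$ is supported on $[0,1]$ and hence determined by its moments, it suffices to show that for every fixed $r\ge 0$
\[
\E\bigl(X_n^{\underline{r}}\bigr)=\frac{[z^{2n}]\,\partial_x^r G(z,x)\big|_{x=1}}{[z^{2n}]\,G(z,1)} \sim \frac{2}{r+2}\,n^r .
\]
Factorial moments and ordinary moments then agree to leading order, which gives both convergence in law and convergence of all moments.

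For the denominator we write $\rho=\sqrt{1-4z^2}$. Setting $x=1$ gives $G(z,1)=2(1-\rho)/(4\rho^3)$, which is consistent with $C_1(2z^2)/2$. Its dominant singularities are at $z=\pm\tfrac12$, with local behaviour $\tfrac12\rho^{-3}$. Singularity analysis, accounting for the two conjugate points (only even powers occur), then yields $[z^{2n}]G(z,1)\sim \frac{1}{2}\cdot\frac{4^n n^{1/2}}{\Gamma(3/2)}$.

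For the numerator we set $s=\sqrt{1-4xz^2}$, so that $G=2(1-s)/\bigl(s(\rho+s)^2\bigr)$ and $\partial_x s=-2z^2/s$. Each $x$-derivative either acts on $s$ and produces a factor $z^2/s^2$ relative to the current term, or acts on the rational dependence; in the latter case it contributes to the same or a lower order of singularity. At $x=1$, where $s=\rho$ and $\rho\to0$, the dominant part of $\partial_x^r G|_{x=1}$ is $\kappa_r\,\rho^{-3-2r}$. We obtain $\kappa_r$ by writing $G$ near the singularity as $2\,s^{-1}(\rho+s)^{-2}(1+O(s))$ and noting that $\partial_x^r$ of $s^{-1}(\rho+s)^{-2}$ has leading term $(z^2)^r\cdot 2^r\cdot h_r(s,\rho)$ with $h_r$ homogeneous of degree $-3-2r$. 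A convenient way to compute it is to substitute $t=1-4xz^2$ and expand $g(t)=t^{-1/2}(\rho+t^{1/2})^{-2}$, so that $\partial_x^r=(-4z^2)^r\partial_t^r$. This reduces $\kappa_r$ to the $r$-th $t$-derivative at $t=\rho^2$ of a function homogeneous of degree $-3/2$ in $(t,\rho^2)$.

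The main obstacle is evaluating this derivative in closed form. We plan to use the integral representation $(a+b)^{-2}=\int_0^\infty \lambda e^{-\lambda(a+b)}\,d\lambda$, or alternatively to expand $(\rho+\sqrt t)^{-2}$ as a series in $\rho/\sqrt t$ and differentiate termwise. This should give $\kappa_r=\frac{2}{r+2}\cdot\frac{\Gamma(r+3/2)}{\Gamma(3/2)}\cdot\frac{1}{2}$ up to the normalising powers. Once $\kappa_r$ is known, singularity analysis transfers $[z^{2n}]\kappa_r\rho^{-3-2r}$ to $\kappa_r\,4^n n^{r+1/2}/\Gamma(r+3/2)$. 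We must also check that the other dominant singularity and the lower-order terms contribute only $O(4^n n^{r-1/2})$. Dividing by the denominator asymptotics leaves exactly $\frac{2}{r+2}n^r$, the $r$-th moment of the density $2x$ on $[0,1]$. As a check on the normalisation, the case $r=1$ should give mean $\tfrac23 n$, in agreement with the value stated before the theorem.

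The coalescence of the singularities of $s$ and $\rho$ at $x=1$ is exactly why all moments have the same order $n^r$, so the limit law is non-degenerate and non-Gaussian.
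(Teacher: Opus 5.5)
Your plan follows the paper's route: factorial moments, singularity analysis at $z^2=1/4$, and the observation that at $x=1$ the factors $s^{-1}$ and $(\rho+s)^{-2}$ coalesce, so each $x$-derivative costs a full factor $\rho^{-2}$. Your homogeneity argument in $(t,\rho^2)$ is the same fact the paper records as $\tilde{G}^{(r)}(1)$ being $t^r g(1)^{-2r-3}$ times a constant. All the values you state are correct. The error analysis is also easier than you expect. Since $t=1-4xz^2$ is affine in $x$, and $G=2t^{-1/2}(\rho+\sqrt t)^{-2}-2(\rho+\sqrt t)^{-2}$ splits into parts homogeneous of degrees $-3/2$ and $-1$, one has exactly $\partial_x^rG|_{x=1}=(4z^2)^r\bigl(\kappa_r\rho^{-3-2r}+\beta_r\rho^{-2-2r}\bigr)$. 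This is a function of $z^2$ whose only singularity is at $z^2=1/4$.

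The genuine gap is the one step that carries all the content. You do not derive $\kappa_r$; you solve for it backwards from the moments you want. As written, you therefore only show that the limit is $\mathrm{Beta}(2,1)$ provided $\kappa_r$ has that value. The paper fills this step with the recursion for $m_{r,i}$ and the PDE for their generating function.

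Neither of your suggested tools closes this step as stated. At $t=\rho^2$ the ratio $\rho/\sqrt t$ equals $1$, which lies on the circle of convergence, and the termwise-differentiated series has terms that do not tend to $0$. The integral representation merely moves the $r$-fold derivative onto $t^{-1/2}e^{-\lambda\sqrt t}$, which you would still have to evaluate in closed form.

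A short fix is to expand around $t=\rho^2$ instead. First, $t^{-1/2}(\rho+\sqrt t)^{-2}=-2\,\partial_t(\rho+\sqrt t)^{-1}$. Second, with $t=\rho^2(1+\varepsilon)$,
\[
(\rho+\sqrt t)^{-1}=\frac{1}{\rho}\cdot\frac{\sqrt{1+\varepsilon}-1}{\varepsilon}=\frac{1}{\rho}\sum_{j\ge 0}\binom{1/2}{j+1}\varepsilon^{j}.
\]
Together these give
\[
\partial_t^r\Bigl[t^{-1/2}(\rho+\sqrt t)^{-2}\Bigr]_{t=\rho^2}=-2\,(r+1)!\binom{1/2}{r+2}\rho^{-3-2r}=(-1)^r\,\frac{\Gamma(r+3/2)}{2(r+2)\,\Gamma(3/2)}\,\rho^{-3-2r}.
\]
Multiplying by $2(-4z^2)^r$ gives $\kappa_r=\frac{1}{r+2}\cdot\frac{\Gamma(r+3/2)}{\Gamma(3/2)}$, which is exactly your target. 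The rest of your argument then goes through unchanged, and this route is shorter than the paper's PDE.
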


\newcommand{\tG}{\tilde{G}}
\begin{proof}[Proof (Sketch)]
Our strategy is to show that the asymptotic moments of $X_n$ coincide with the moments of a $\mathrm{Beta}(2,1)$ distribution. Since the Beta distribution is uniquely characterized by its moments on bounded intervals (the Hausdorff moment problem), the convergence in law follows from the Fr{\'e}chet--Shohat Theorem~\cite{FrechetShohat1931Moments}; see also, e.g.,~\cite{BanderierEtal2024ML} for this approach.

By standard principles, the $r$-th factorial moment $\E((X_n)_r)$ is obtained by extracting the coefficients of the $r$-th partial derivative of the generating function $G(z,x)$ with respect to $x$, evaluated at $x=1$. To extract the asymptotics for large $n$, we apply singularity analysis~\cite{FlajoletSedgewick2009AC}.

Setting $t=4z^2$, we isolate the dominant singular behavior of $G(\sqrt{t}/2,x)$ near $t=1$. Discarding constants and non-singular factors that do not affect the asymptotic distribution, the core shape reduces to:
\[
    \tG(x) \equiv \tG(t,x) = \frac{1}{g(x)(g(1)+g(x))^2}, \quad \text{where} \quad g(x) = \sqrt{1-xt}.
\]
The asymptotic moments depend on the derivatives $\tG^{(r)}(1)$. 
By induction, we see that the generic shape of the $r$-th derivative is
\begin{align*}
    \tG^{(r)}(x) &= t^r \sum_{i=0}^r \frac{m_{r,i}}{g(x)^{2r+1-i} (g(1)+g(x)))^{i+2}}
    & \text{ and } &&
    \tG^{(r)}(1) &= \frac{t^r}{4 g(1)^{2r+3}} \sum_{i=0}^r \frac{m_{r,i}}{2^{i}}.
\end{align*}
Therefore, we need to study the sum $\sum_{i=0}^r \frac{m_{r,i}}{2^{i}}$. 
For this purpose, we define a polynomial associated with this sum by
$
    m_r(u) = \sum_{i=0}^r m_{r,i} u^i,
$
such that we are interested in $m_r(1/2)$.
The associated exponential generating function $M(u,y) = \sum_{r \ge 0} m_r(u) \frac{y^r}{r!}$ satisfies the following linear partial differential equation:
\[
    u(u+1) \frac{\partial M}{\partial u} + 2(y-1)\frac{\partial M}{\partial y} + (2u+1) M = 0, \quad \text{with } M(u,0)=1.
\]
This equation admits a closed-form solution, which then yields the explicit asymptotic growth of the derivatives. Finally, we obtain the asymptotic factorial moments:
\[
    \E((X_n)_r) \sim \frac{2}{r+2} n^r.
\]
Because these factorial moments are asymptotically equivalent to the raw moments, we have $\E(X_n^r) \sim \frac{2}{r+2} n^r$. These are precisely the scaled moments of a $\mathrm{Beta}(2,1)$ distribution.
\end{proof}

\begin{figure}
    \centering
    \includegraphics[width=0.28\linewidth]{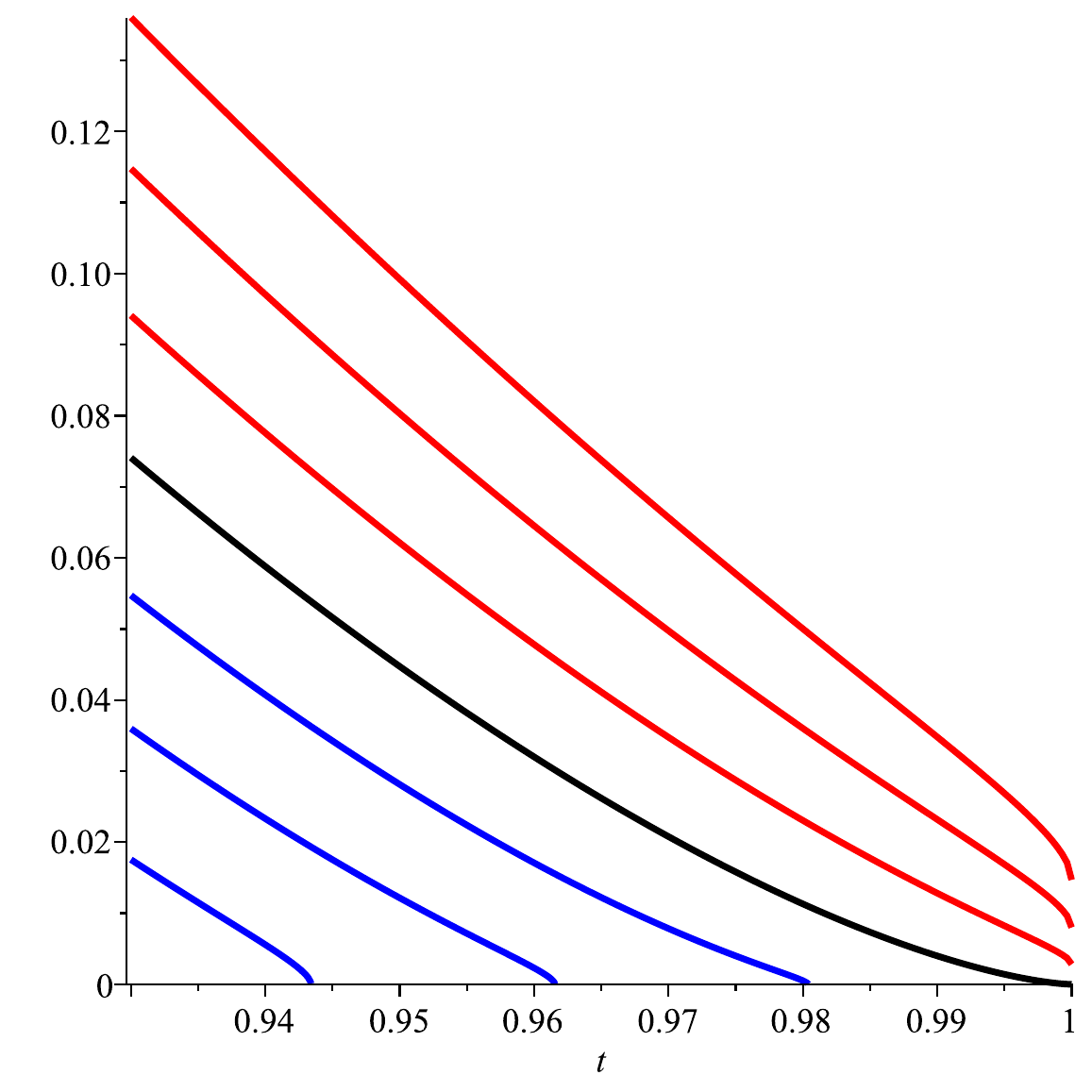}
    \qquad
    \includegraphics[width=0.28\linewidth]{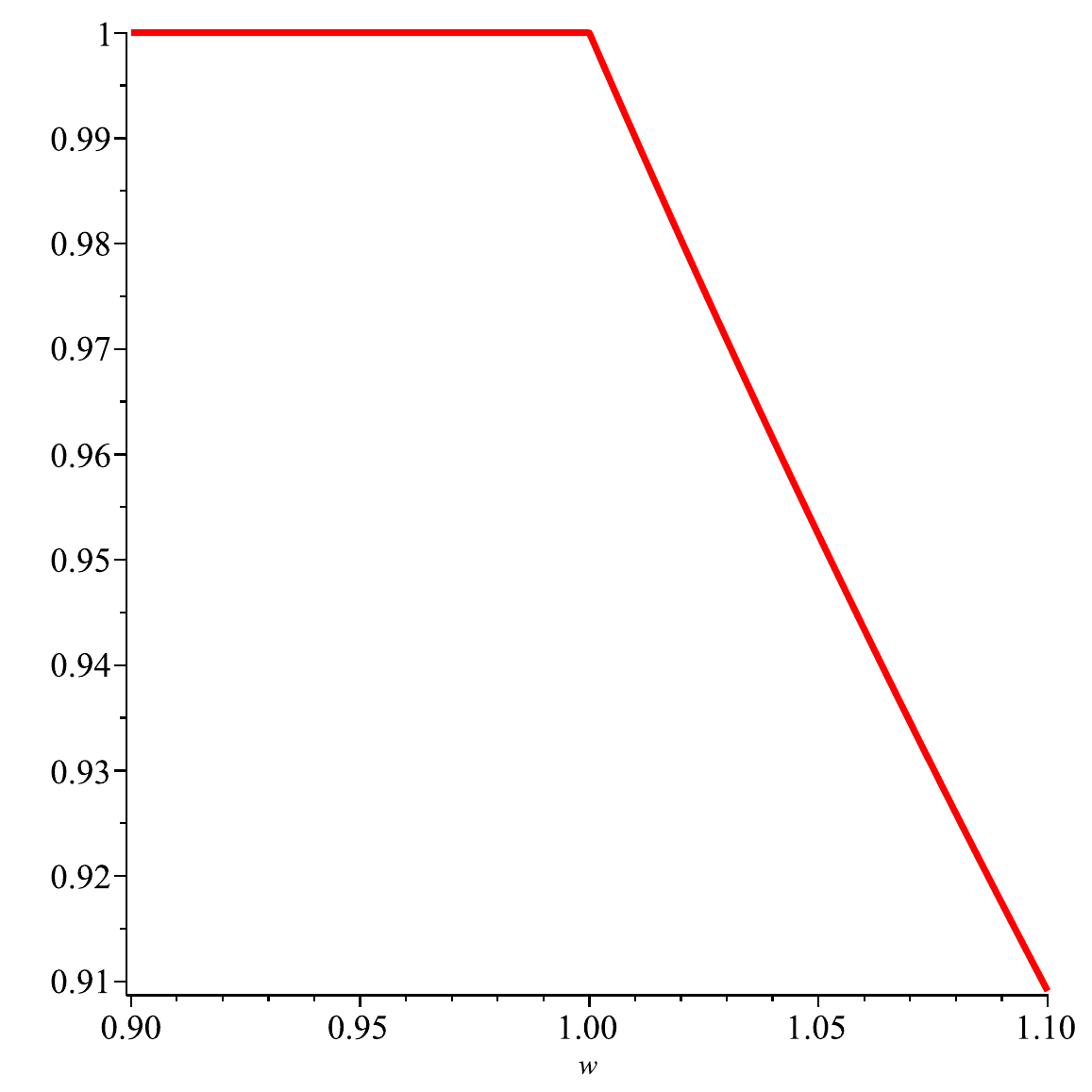}
    \qquad
    \includegraphics[width=0.28\linewidth]{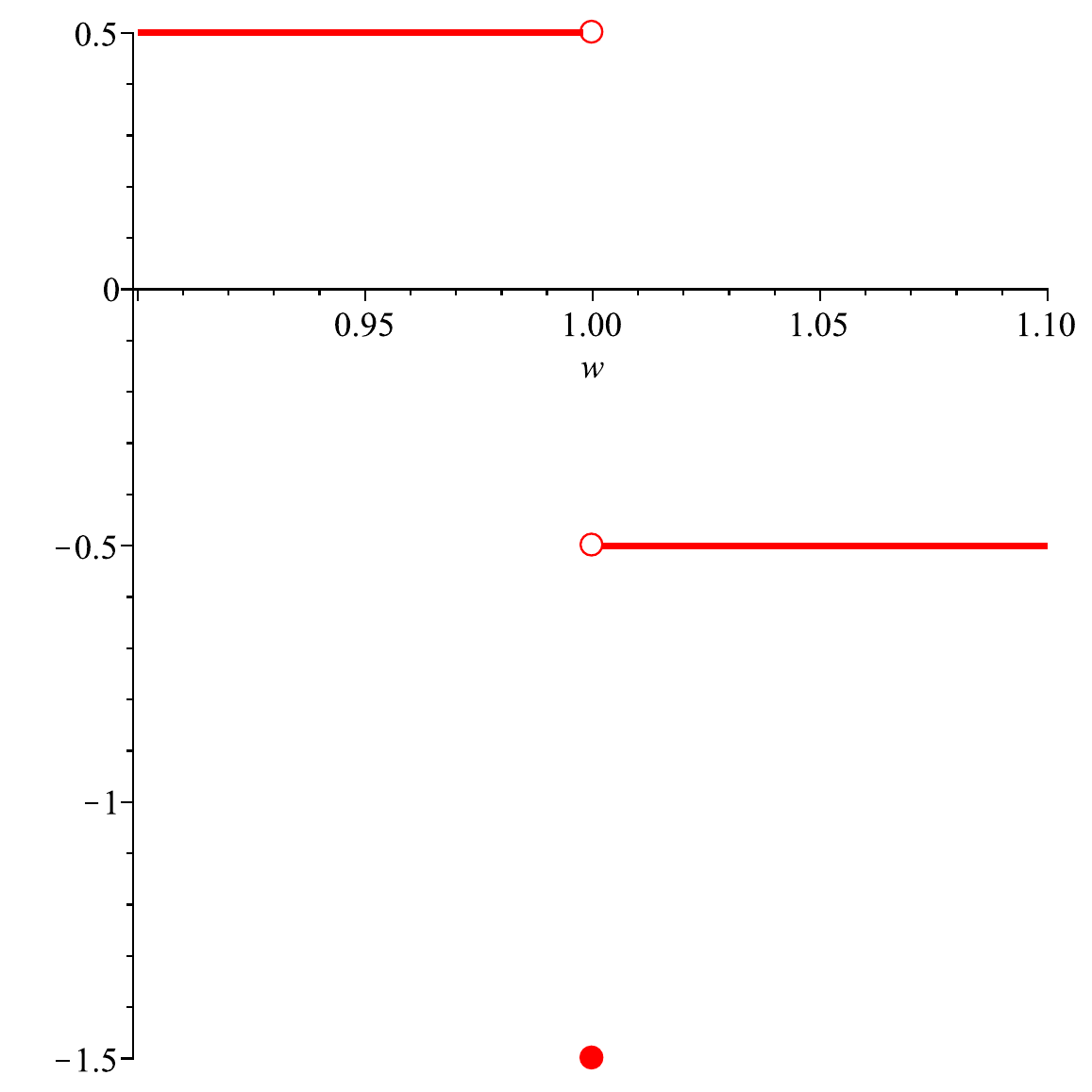}
    \qquad
    \caption{Left: A plot of $t \mapsto 1/\tG(t,x)$ for $t \in [0.9,1]$ when $x \in [0.9,1.1]$ (red curves $x<1$, black curve $x=1$; blue curves $x>1$);
    center: the singularity $\rho(x)$; right: the dominant singular exponent $\alpha(u)$ in $(1-t/\rho(x))^{\alpha(x)}$.
    Both, the singularity $\rho(x)$ and the singular exponent $\alpha(x)$ experience a non-analytic transition at $x=1$.
     This is due to a collapsing of two singular terms at $x=1$.}
    \label{fig:Gtilde_phasetransition}
\end{figure}

\subsection{Value in the unique cell in the bottom row in \texorpdfstring{$\Bc_{n,1}^*$}{B*\_{n,1}}}

A second natural parameter in the tableaux of the class $\Bc^*_{n,1}$ is the value in the unique cell in the bottom row.
As in the previous case, we use the bijection between the tableaux and bicolored Dyck paths described in Sections~\ref{sec:conjinlps} and~\ref{GF of BK}.
Recall, that this cell is associated with the unique up step $U_2$.
Revisiting the bijection we observe that if $U_2$ is the $k$-th step in the Dyck path, then the value in this cell may be any integer of $\{1,2,\dots,k\}$, which corresponds to the weight of this step in the recurrence. 
These paths can be visualized as enriched Dyck paths in which we attach an additional pointer to this special step $U_2$.
This pointer starts at this $U_2$-step and ends at any earlier step or the $U_2$-step itself.
In the tableaux, the value of the cell is then $k$ if the pointer ends at the $k$-th step from the beginning.

We mark this value with the variable $x$ and define the bivariate generating function
\[
H(t,x)=\sum_{n,m\ge 0} h_{n,m}\,t^{n}x^{m},
\]
where $h_{n,m}$ is the number of Dyck paths of length $2n$ in which the pointer of the $U_2$-step ends at the $m$-th step from the beginning.
Note that $H(z^2,1)=A_1(z)$.

\begin{proposition}
    The generating function $H(t,x)$ is algebraic of degree $4$ given by
    \begin{align*}
        H(t,x) &= \frac {x \left( 1-\sqrt {1-4t} \right)  \left( 1-x + x\sqrt {1-4t}-\sqrt {1-4tx^2} \right) }{ 
        {(1-x)\sqrt {1-4t}}  \left( 1-4tx + \sqrt {1-4tx^2}\sqrt {1-4t}-\sqrt {1-4t} - \sqrt {1-4tx^2} \right) 
        }
               \\&= 
               xt+ \left( {x}^{2}+2\,x+4 \right) x{t}^{2}+ \left( 2\,{x}^{4}+4\,{x}^
{3}+7\,{x}^{2}+10\,x+15 \right) x{t}^{3}+O \left( {t}^{4} \right) 
            .
    \end{align*}
\end{proposition}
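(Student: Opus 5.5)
The plan is to redo the last-passage decomposition of Theorem~\ref{theo:recursiveAk} for $k=1$, with one change: the steps before the pointer's target carry a marking variable. We work with a step variable $z$ and set $t=z^2$ at the end, which is legitimate because every Dyck path has even length. A path counted by $h_{n,m}$ is a Dyck path of length $2n$ with a distinguished $U_2$-step, say the $k$-th step, and a pointer to a step $m\le k$. We give each of the first $m$ steps the weight $zx$ and every later step the weight $z$. The exponent of $x$ is then exactly $m$, and the total exponent of $z$ is still the length.

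We split according to whether $m=k$ or $m<k$.

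\emph{Case $m=k$.} The pointer ends at the $U_2$-step itself. The part up to and including $U_2$ is a Dyck meander followed by an up step, with every step weighted $zx$. Its generating function is $\Mup(zx,u)=\frac{uE(zx)}{1-uE(zx)}$.

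\emph{Case $m<k$.} We cut the path after the $m$-th step and again at the first minimum between step $m$ and the $U_2$-step, as in Figure~\ref{fig:decomposition_Fk}.
\begin{itemize}
\item The first piece is a nonempty meander in which every step is weighted $zx$. Its generating function is $A(u):=\frac{D(zx)}{1-uE(zx)}-1$, since $M_\ell(zx)=D(zx)E(zx)^\ell$.
\item The second piece descends to the minimum. As before, it is encoded by the operator $u^\ell\mapsto \frac{u^{\ell+1}-E(z)^{\ell+1}}{u-E(z)}$. Applied to $A$, this gives $\frac{uA(u)-E(z)A(E(z))}{u-E(z)}$.
\item The third piece is a meander that ends with the $U_2$-step and is weighted by $z$ alone. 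It contributes the factor $\Mup(z,u)$.
\end{itemize}
No differential operator is needed here. The weight of the $U_2$-step, namely its position $k$, is already accounted for by summing over all pointer targets $m\in\{1,\dots,k\}$.

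In both cases we close the path by substituting $u=E(z)$ and multiplying by $D(z)$, exactly as in $B_k=F_k(z,E)D$. Altogether,
\[
H(z^2,x)=D(z)\Bigl[\Mup(zx,u)+\frac{uA(u)-E(z)A(E(z))}{u-E(z)}\,\Mup(z,u)\Bigr]_{u=E(z)}.
\]
Since $u=E(z)$ is substituted into a quotient with denominator $u-E(z)$, this evaluation must be read as a limit, that is, as a derivative.

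To get the closed form, we write $E(z)$ and $E(zx)$ through $\sqrt{1-4t}$ and $\sqrt{1-4tx^2}$, using $E(z)=\frac{1-\sqrt{1-4z^2}}{2z}$ and $D=E/z$. We then clear denominators and simplify with computer algebra. Two checks confirm the form:
\begin{itemize}
\item Only these two square roots appear, so $H$ lies in a biquadratic extension of $\mathbb{Q}(t,x)$ and is algebraic of degree $4$.
\item The expansion begins $xt+(x^2+2x+4)xt^2+\cdots$. Setting $x=1$ recovers $B_1$, which equals $A_1(z)$ as a function of $t=z^2$.
\end{itemize}

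The main obstacle is the symbolic simplification. The confluent evaluation at $u=E(z)$ produces a derivative term, and rewriting the result in the stated rationalized shape involves the combination $\sqrt{1-4tx^2}\sqrt{1-4t}$. The factor $(1-x)$ in the denominator is a removable singularity. It should be verified by series comparison at small orders and then by an exact check in Maple.
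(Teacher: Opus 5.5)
Your derivation is correct, but it takes a different route from the one the stated closed form reflects. The paper gives no proof of this proposition. Its proof for $G(z,x)$ modifies only the prefix $M_{up}(z,u)$ of $F_1=z\frac{d}{dz}M_{up}$. The analogue here would replace the pointing weight $k$ of the $U_2$-step by $x+\dots+x^k$, i.e.\ $z^k\mapsto\frac{x}{1-x}\bigl(z^k-(xz)^k\bigr)$ on the prefix, which gives at once
\[
H(z^2,x)=\frac{x}{1-x}\,D(z)\Bigl[M_{up}\bigl(z,E(z)\bigr)-M_{up}\bigl(xz,E(z)\bigr)\Bigr].
\]
Set $S=\sqrt{1-4t}$, $R=\sqrt{1-4tx^2}$ and $W=1-4tx+SR-S-R$. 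Then $D(z)=\frac{1-S}{2t}$, $\frac{E^2}{1-E^2}=\frac{1-S}{2S}$ and $1-E(z)E(xz)=-\frac{W}{4tx}$. So the three denominator factors $1-x$, $\sqrt{1-4t}$ and $W$ of the stated form appear directly, and a single rationalization finishes.

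Your cut at the pointer target instead produces
\[
H(z^2,x)=D\Bigl[\frac{EE_x}{1-EE_x}+\Bigl(\frac{D_x}{(1-EE_x)^2}-1\Bigr)\frac{E^2}{1-E^2}\Bigr],
\qquad E=E(z),\ E_x=E(xz),\ D_x=D(xz).
\]
This has real advantages: it is visibly a power series in $x$ with no removable singularity, it gives $B_1$ at $x=1$ immediately, and it reproduces the coefficients through $t^3$. The price is that matching the stated form needs the identity $\frac{EE_x}{1-EE_x}=\frac{E^2}{1-E^2}\bigl(1-\frac{(1-x)D_x}{(1-EE_x)^2}\bigr)$. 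You need not leave this to Maple. Substitute $D_x=1+E_x^2$ and $x=\frac{E_x(1+E^2)}{E(1+E_x^2)}$, then multiply by $(1-E^2)(1-EE_x)^2/E$; both sides become $E_x-E^2E_x-EE_x^2+E^3E_x^2$.

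One remaining gap concerns the degree claim. Lying in $\mathbb{Q}(t,x)(S,R)$ only shows that the degree divides $4$. For degree exactly $4$ you must also check that no nontrivial sign change of $(S,R)$ fixes $H$; the paper leaves this point implicit as well.
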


Now, we are able to define the random variable $Y_n$ associated with the value in the unique bottom cell of a tableaux from $\Bc^*_{n,1}$ drawn uniformly at random:
\begin{align*}
    \mathbb{P}(Y_n=m) = \frac{[t^{n} x^m]H(t,x)}{[t^{n}]H(t,1)}.
\end{align*}

\begin{theorem}\label{theo:Yn_valueBn1}
    The random variable $Y_n$ of the distribution of the value of the unique bottom cell in $\mathcal{B}^*_{n,1}$ converges after rescaling by $n$ in law with convergence of all moments to a random variable $Y$ distributed like a $\operatorname{Beta}(1,2)$ distribution:
    \begin{align*}
        \frac{Y_n}{2n} &\stackrel{d}{\to} Y  \qquad \text{ where } \qquad
        Y \stackrel{d}{=}\operatorname{Beta}(1,2).
    \end{align*}
    The probability density function of $Y$ is given by $f_Y(y) = 2(1-y)$ for $y \in [0,1]$ and is equal to $0$ otherwise. 
    The moments are given by $\E(Y^r) = \frac{2}{(r+1)(r+2)}$.
\end{theorem}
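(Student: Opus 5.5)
We will prove Theorem~\ref{theo:Yn_valueBn1} by the method of moments, in parallel with the proof for the column index $X_n$. Since $\mathrm{Beta}(1,2)$ lives on $[0,1]$, it is determined by its moments. By the Fr\'echet--Shohat theorem it therefore suffices to show
\[
\E\bigl((Y_n)_r\bigr)\sim \frac{2}{(r+1)(r+2)}(2n)^r
\]
for every fixed $r\ge 0$. Factorial and raw moments agree to first order at this scale. The factorial moments are
\[
\E\bigl((Y_n)_r\bigr)=\frac{[t^n]\,\partial_x^r H(t,x)|_{x=1}}{[t^n]H(t,1)},
\]
and we extract both numerator and denominator by singularity analysis at the dominant singularity $t=1/4$.

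\textbf{Denominator.} We have $H(t,1)=A_1(\sqrt t)$. From $B_1=E^2(1+E^2)^2/(1-E^2)^3$ and $\sqrt{1-4t}=(1-E^2)/(1+E^2)$, we get $A_1\sim c\,(1-4t)^{-3/2}$. Hence $[t^n]H(t,1)\sim c'\,4^n n^{1/2}$.

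\textbf{Derivatives at $x=1$.} The closed form of $H$ has an apparent singularity at $x=1$: both the factor $(1-x)$ and the bracket in the denominator vanish there. This is the coalescence of the singular terms $\sqrt{1-4t}$ and $\sqrt{1-4tx^2}$. So we first rewrite $H$ to remove it.

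1. Set $s=\sqrt{1-4t}$ and $q=\sqrt{1-4tx^2}$. Express the differences through $s-q=\dfrac{4t(x^2-1)}{s+q}$, which cancels the factor $1-x$. This gives a form regular at $x=1$, analogous to the form $1/\bigl(g(x)(g(1)+g(x))^2\bigr)$ used for $X_n$.

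2. Keep only the dominant singular shape near $t=1/4$. We expect something like
\[
\tilde H(t,x)=\frac{P(x)}{s\,(s+q)^{a}}
\]
with a simple prefactor $P$ (the factor $x$, and possibly a rational part in $x$).

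3. Differentiate $r$ times in $x$. Each derivative of $q$ produces $q^{-2j+1}$ together with powers of $t$. The structure $\partial_x^r\tilde H=t^r\sum_i m_{r,i}\,q^{-(\cdot)}(s+q)^{-(\cdot)}$ persists, with extra terms from $P$ and the factor $x^2$ inside $q$.

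4. At $x=1$ set $q=s$. Every term then becomes a multiple of $s^{-(2r+3)}$, while terms with fewer $q$-derivatives are of lower singular order. So we only need the leading coefficient $\kappa_r$ of $s^{-2r-3}$, which yields $[t^n]\sim \kappa_r\,4^n n^{r+1/2}/\Gamma(r+3/2)$.

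\textbf{Main obstacle.} The hard step is identifying $\kappa_r$ in closed form. As for $X_n$, we would encode the coefficients $m_{r,i}$ in a polynomial $m_r(u)$ and pass to its exponential generating function. The recursion $m_{r+1,i}$ coming from one differentiation becomes a first-order linear PDE of the form
\[
u(u+1)M_u+\alpha(y)M_y+\beta(u)M=0,\qquad M(u,0)=1,
\]
which we solve by characteristics and then evaluate at the appropriate $u$.

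The target is
\[
\frac{\kappa_r}{\Gamma(r+3/2)}\Big/\frac{\kappa_0}{\Gamma(3/2)}\;\sim\;\frac{2\cdot 2^r}{(r+1)(r+2)}.
\]
The factor $2^r$ reflects the scaling by $2n$: $Y_n$ ranges up to the path length $2n$.

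\textbf{Sanity check.} Before attempting the PDE we would verify $r=1$ directly; it should give mean $\sim 2n/3$. If the PDE proves unwieldy, a fallback is a direct combinatorial argument. Conditioned on the $U_2$ step being at position $K$ (with $K/(2n)$ having the limiting $\mathrm{Beta}(2,1)$-type law coming from the weight $K$), the pointer is uniform on $\{1,\dots,K\}$. A uniform fraction $U$ of a $\mathrm{Beta}(2,1)$ variable is $\mathrm{Beta}(1,2)$, since $\E(U^rV^r)=\frac{1}{r+1}\cdot\frac{2}{r+2}$. Making this rigorous only requires the law of the step position, which follows from the same analysis as for $G$.
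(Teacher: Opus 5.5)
Your proposal is correct, but it takes a different route from the paper.

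\textbf{Comparison with the paper.} The paper does not use moments for this theorem. It applies L\'evy's continuity theorem to $\phi_n(u)=[t^n]H(t,e^{iu/n})/[t^n]H(t,1)$ and extracts coefficients on a Hankel contour at $t=1/4$. In that window the singularities $t=1/4$ and $t=1/(4x^2)$ merge, and the limit is matched with $2(iu+1-e^{iu})/u^2$. This reads the law directly off the closed form of $H$. However, as the paper's own footnote concedes, it gives only convergence in distribution, not the convergence of moments the statement asserts. The scaling there should also be $e^{iu/(2n)}$ to match $Y_n/(2n)$. Both of your routes are moment-based, so they deliver the full statement.

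\textbf{Your fallback.} This is the most economical and explanatory argument. Given the bicolored path with its $U_2$-step at position $K_n$, the bottom value is uniform on $\{1,\dots,K_n\}$; this is exactly the weight $\hat w$. Hence $\E(Y_n^r\mid K_n)=\frac{K_n^r}{r+1}+O(K_n^{r-1})$, and everything reduces to $\E(K_n^r)\sim\frac{2}{r+2}(2n)^r$. It exhibits $\operatorname{Beta}(1,2)$ as a uniform thinning of $\operatorname{Beta}(2,1)$ and never needs $H$.

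One caveat: the paper's $\operatorname{Beta}(2,1)$ theorem concerns the column $X_n$, which is the index among up-steps, not the position $K_n$ among all steps. You can close this in either of two ways:
\begin{itemize}
\item redo the $G$ computation with $x$ marking every step up to $U_2$; or
\item use $K_n=2X_n-\ell$, with $\ell$ the height after $U_2$.
\end{itemize}
In the second option, $\ell$ is $O(\sqrt n)$ in every $L^r$. This holds because the weight $K\le 2n$ distorts the uniform measure on marked Dyck paths by at most a bounded factor.

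\textbf{Your primary route: two corrections.} First, at $x=1$ it is the numerator factor $1-x+xs-q$ that vanishes, not the denominator bracket. The bracket equals $-4tx\bigl(1-E(\sqrt t)E(\sqrt t\,x)\bigr)$. At $x=1$ this is $-2s(1-s)\neq 0$, and it vanishes only at $(t,x)=(1/4,1)$. Your cancellation still works via $1-x+xs-q=(1-x)(1-s)+(s-q)$. The resulting core shape is $2/\bigl(s(s+q)^2\bigr)$, so $a=2$.

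Second, the step you flag as the main obstacle needs no PDE. Every leading term of $\partial_x^r$ has order $s^{-2r-3}$, so you may set $x=1+s^2w$ and $4t\to 1$. Then $q\approx s\sqrt{1-2w}$ and $(1+\sqrt{1-2w})^{-2}=\frac14\mathrm{Cat}(w/2)^2$, where $\mathrm{Cat}(v)=\frac{1-\sqrt{1-4v}}{2v}=\sum_{r\ge0}\mathrm{Cat}_r v^r$. Since $[v^r]\mathrm{Cat}(v)^2=\mathrm{Cat}_{r+1}$, this gives $\kappa_r=r!\,\mathrm{Cat}_{r+1}/2^{r+1}$. The required ratio is then $r!\,\mathrm{Cat}_{r+1}/(2r+1)!!=2^{r+1}/((r+1)(r+2))$, which is exactly your target.
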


\begin{proof}[Proof (Sketch)]
    We follow the approaches~\cite{DrmotaSoria1997Images,Wallner2020Halfnormal} that use Lévy's continuity theorem for characteristic functions (but for different distributions).\footnote{Note that this approach does not provide convergence of moments.} 
    It says, that if the characteristic functions $\phi_n(u) := \E(e^{i u Y_n})$ converge pointwise for all real numbers $t \in \mathbb{R}$ against the characteristic function $\phi(u) := \E(e^{i u Y})$ then $Y_n$ converges in distribution against $Y$. 
    A straightforward computation shows that $\phi(u) = \frac{2(iu + 1 - e^{iu})}{u^2}$ for the $\operatorname{Beta}(1,2)$-distribution.

    Now we compute $\phi_n(u)$.
    Using the generating function we can express it as 
    \begin{align*}
        \phi_n(u) = \frac{[t^n] H(t, e^{i u/n}) }{[t^n] H(t,1)}.
    \end{align*}
    What follows now are further technical computations that we omit. 
    It remains to use Cauchy's coefficient formula, expand around the singularity, and use a Hankel contour to expand the coefficients. Finally, using the Hankel representation of the Gamma function the result is obtained and the claim follows.
\end{proof}

\newcommand{\cp}{v}
\newcommand{\CP}{V}
\subsection{Value in the unique cell in the top row in \texorpdfstring{$\Cc_{n,1}^*$}{C*\_{n,1}}}

We now consider the analogous parameter in $\Cc^*_{n,1}$, enumerated by $y_{1,0,n}$. The top row has a single cell (column 1), the middle and bottom rows each have $n$ cells separated by walls. Thus, the only nontrivial parameter is the value in the top cell.

Contrary to the previous section, our approach consists now in enriching the underlying recurrence for $y_{1,0,n}$ by another parameter and not the associated generating function. 
Let $\cp_{n,i}$ for $n \geq 0 $ and $1\le i\le 2n+1$ denote the number of Young tableaux of
shape $\Cc_{n,1}^*$ whose unique top cell contains the value $i$. 

\begin{lemma}
The sequence $\cp_{n,i}$ satisfies the following recurrence:
\begin{align} \label{eq:recnumy10n}
\cp_{n,i}=
\begin{cases}
(2n-i)\,\cp_{n-1,i} + (i-1)\,\cp_{n-1,i-1}, & \text{ for } 1\le i\le 2n,\\
(2n-1)!!, & \text{ for } i=2n+1,
\end{cases}
\qquad n\ge 1,
\end{align}
where
$\cp_{n,i}=0$ outside of the above-defined domain.  
\end{lemma}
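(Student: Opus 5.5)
We decompose according to the position of the maximal entry $2n+1$, as in the proof of the recurrence~\eqref{eq:rec_ykl1l2}. In a tableau of shape $\Cc_{n,1}^*$ the columns increase from bottom to top, the middle row increases from left to right, and the top cell sits above the middle cell of column~$1$. The bottom cells are unconstrained horizontally because of the walls, and each is only required to be smaller than the middle cell above it. Consequently the maximum $2n+1$ lies either in the top cell or in the last middle cell, i.e.\ in column $n$. For $n\ge 2$ the latter cell is not in column~$1$, so both cases can occur.

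\textbf{Case $i=2n+1$.} If the top cell contains $2n+1$, we delete it. What remains is an arbitrary tableau of shape $\Cc^*_{n,0}$, with entries $1,\dots,2n$. By Section~2.4 (the case $k=0$) there are $c_{n,0}=(2n-1)!!$ of these. This gives the second line of~\eqref{eq:recnumy10n}.

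\textbf{Case $i\le 2n$.} Here the maximum sits in the middle cell of column $n$. We remove this cell together with the bottom cell of column $n$, which holds some value $b\in\{1,\dots,2n\}$. Because of the walls, $b$ is subject to no constraint other than $b<2n+1$. Relabeling the remaining $2n-1$ entries order-preservingly gives a tableau of shape $\Cc^*_{n-1,1}$. Its top value is $j=i$ if $b>i$, and $j=i-1$ if $b<i$. Conversely, a tableau of shape $\Cc^*_{n-1,1}$ with top value $j$ and a choice of $b$ determine a unique tableau of shape $\Cc_{n,1}^*$: we shift up the entries $\ge b$, insert $b$ in the new bottom cell, and put $2n+1$ above it.
\begin{itemize}
\item To obtain top value $i$ from a tableau with $j=i$, we need $b\in\{i+1,\dots,2n\}$, which gives $2n-i$ choices.
\item To obtain top value $i$ from a tableau with $j=i-1$, we need $b\in\{1,\dots,i-1\}$, which gives $i-1$ choices.
\end{itemize}
Summing the two contributions gives $(2n-i)\cp_{n-1,i}+(i-1)\cp_{n-1,i-1}$. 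Because the construction is a bijection, no tableau is counted twice.

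\textbf{Main obstacle.} The delicate part is the boundary behaviour at $n=1$. There, column $n$ coincides with column $1$, so the maximum cannot lie in the middle row: the top cell is above it. The shape $\Cc^*_{0,1}$ (a lone top cell) also violates the cone condition. We therefore check $n=1$ directly: the only tableau has $t>m>b$, so $\cp_{1,3}=1=(1)!!$ and $\cp_{1,1}=\cp_{1,2}=0$. This agrees with the recurrence once we adopt the convention $\cp_{0,\cdot}=0$, in line with the stated rule that $\cp_{n,i}=0$ outside the admissible domain. With that convention fixed, the general step is a routine verification of the bijective insertion described above.
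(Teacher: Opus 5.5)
Your proposal is correct and follows the same route as the paper. Both decompose by the position of the maximal entry $2n+1$ (the top cell versus the last middle cell), in the same way as the paper's proof of the recurrence for $y_{k,\ell_1,\ell_2}$, and yours makes explicit what the paper's sketch omits: the free bottom-cell value $b$ splitting into $2n-i$ choices with $b>i$ and $i-1$ choices with $b<i$, and the boundary case $n=1$, which needs the convention $v_{0,\cdot}=0$.
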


\begin{proof}[Proof (Sketch)]
The recurrence is obtained by removing the largest entry $2n+1$.
There are two possible positions: either it is in the unique cell in the top row, or in the last cell in the middle row. 
\end{proof}

We will now analyze this recurrence using a mixed exponential generating function.
Let 
\[
    \CP(t,x) = \sum_{n,i\ge0} \cp_{n,i} \, \frac{t^n}{n!}x^i .
\]

\begin{proposition}
    The generating function $\CP(t,x)$ is algebraic of degree $4$ given by
    \begin{align*}
        \CP(t,x) &= {\frac {{x}^{2}}{1-x} \left( {\frac {1-x
-x\sqrt {1-2t}}{1-2x+2t{x}^{2}}  }-{\frac {1}{\sqrt {1-2t{x}^{2}}}}
 \right) }
               \\&= 
               {x}^{3}t+ \left( 3x^5 + 3x^4 + x^3 \right) \frac{{t}^{2}}{2}+ \left( 15x^7 + 15x^6 + 15x^5 + 9x^4 + 3x^3 \right) \frac{{t}^{3}}{6}+ O\left( {z}^{4} \right) 
            .
    \end{align*}    
\end{proposition}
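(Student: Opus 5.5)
The plan is to turn the recurrence~\eqref{eq:recnumy10n} into a first-order linear PDE for $\CP(t,x)$, solve it by characteristics, and then check that the solution agrees with the claimed closed form. For the algebraic degree, we argue directly from that closed form.

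\emph{Deriving the PDE.} Multiply the recurrence by $t^{n-1}x^i/(n-1)!$ and sum over $n\ge1$. The left-hand side becomes $\partial_t \CP$. On the right-hand side the factors $i$ and $i-1$ become $x$-derivatives:
\[
\sum (2n-i)\cp_{n-1,i}\frac{t^{n-1}}{(n-1)!}x^i = \bigl(2t\partial_t+2-x\partial_x\bigr)\CP ,\qquad
\sum (i-1)\cp_{n-1,i-1}\frac{t^{n-1}}{(n-1)!}x^i = x^2\partial_x \CP .
\]
The boundary term $i=2n+1$ has to be handled with care. The entries $(2n-1)!!$ sit exactly on the line $i=2n+1$, so the first-branch formula must not be applied at that index. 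Instead we add the inhomogeneous term
\[
S(t,x)=\sum_{n\ge1}(2n-1)!!\,x^{2n+1}\frac{t^{n-1}}{(n-1)!},
\]
which is the $t$-derivative of $x\sum_n (2n-1)!!(tx^2)^n/n! = x\bigl((1-2tx^2)^{-1/2}-1\bigr)$. We also subtract whatever spurious contribution the first formula produces at $i=2n+1$, and we should check whether $\cp_{n-1,2n+1}=0$ and $\cp_{n-1,2n}=0$ make that contribution vanish automatically. The result is the PDE
\[
(1-2t)\partial_t \CP + x(1-x)\partial_x \CP - 2\CP = S(t,x).
\]

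\emph{Solving.} The characteristics are given by $dt/(1-2t)=dx/(x(1-x))$. Their invariant is $\sqrt{1-2t}\,\frac{x}{1-x}$ (up to the choice of sign). Integrating along a characteristic from $t=0$, with initial condition $\CP(0,x)=0$ for $n=0$, gives $\CP$ explicitly. The resulting expression should produce the terms $\frac{x^2}{1-x}\cdot\frac{1-x-x\sqrt{1-2t}}{1-2x+2tx^2}$ and $-\frac{x^2}{(1-x)\sqrt{1-2tx^2}}$.

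\emph{Verification route.} It is safer to verify than to derive. Substitute the claimed closed form into the PDE, which is a routine symbolic computation, and check the initial condition $\CP(0,x)=\frac{x^2}{1-x}\bigl(1-1\bigr)=0$. A first-order linear PDE with noncharacteristic data at $t=0$ has a unique formal power series solution, so this establishes the formula. As a sanity check, compare the first coefficients with the displayed expansion.

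\emph{Algebraic degree.} The function lies in $\mathbb{Q}(t,x)\bigl(\sqrt{1-2t},\sqrt{1-2tx^2}\bigr)$, which has degree $4$ over $\mathbb{Q}(t,x)$. The two radicands are not proportional modulo squares, so $\CP$ is algebraic of degree at most $4$. To show the degree is exactly $4$, check that $\CP$ is not fixed by any nontrivial Galois automorphism, i.e.\ by flipping one or both signs. Since $\CP$ involves both radicals non-trivially, each single sign flip changes it.

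\emph{Main obstacle.} The expected main obstacle is getting the boundary term correct. The first-branch formula gives a wrong value at $i=2n+1$, and the diagonal $(2n-1)!!$ terms have to be injected exactly once. The second-hardest step is the simplification showing that the characteristic solution equals the stated rational expression in the two square roots.
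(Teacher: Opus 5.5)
Your proposal is correct and follows the route the paper indicates: translate the recurrence into the mixed exponential generating function, obtaining the first-order PDE $(1-2t)\partial_t\CP+x(1-x)\partial_x\CP-2\CP=x^3(1-2tx^2)^{-3/2}$, which the closed form does satisfy with $\CP(0,x)=0$ (the spurious boundary contribution indeed vanishes because $\cp_{n-1,j}=0$ for $j\ge 2n$). For the degree claim, also handle the double sign flip explicitly: $\CP=a_0+a_1\sqrt{1-2t}+a_2\sqrt{1-2tx^2}$ with $a_0,a_1,a_2\in\mathbb{Q}(t,x)$, $a_1,a_2\neq 0$ and no product term, so every nontrivial automorphism of $\mathbb{Q}(t,x)(\sqrt{1-2t},\sqrt{1-2tx^2})$ moves $\CP$.
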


Let $Z_n$ be the random variable equal to the value in the unique top cell of a uniformly random standard Young tableau from $\Cc^*_{n,1}$:
\begin{align*}
    \mathbb{P}(Z_n=m) = \frac{[t^{n} x^m]V(t,x)}{[t^{n}]V(t,1)}.
\end{align*}
Then from the generating function we directly get
\[
\mathbb{E}[Z_n]
=
\frac{[t^n]\,\partial_x \CP(t,1)}{[t^n]\,\CP(t,1)}=\frac{\cp'_n(1)}{\cp_n(1)}
=
n-\frac{\sqrt{\pi n}}{2}+O(1).
\]
One can get all asymptotic moments using the same strategy as before.
In this case, the limiting distribution is a $\operatorname{Beta}(1,1)$ law, which is the uniform distribution on $[0,1]$.
We omit its proof which follows the same lines as the one of Theorem~\ref{theo:Yn_valueBn1}.
\begin{theorem}   
The random variable $Z_n$ of the distribution of the value of the unique top cell in $\mathcal{C}^*_{n,1}$ converges after rescaling by $n$ in law to a random variable $Z$ distributed like a uniform distribution on $[0,1]$: 
\begin{align*}
    \frac{Z_n}{2n}  &\stackrel{d}{\to} Z \qquad \text{ where } \qquad
    Z \stackrel{d}{=}\operatorname{U}(0,1).
\end{align*}
The probability density function of $Z$ is given by $f_Z(z) = 1$ for $z \in [0,1]$ and is equal to $0$ otherwise. 
The moments are given by $\E(Z^r) = \frac{1}{r+1}$.
\end{theorem}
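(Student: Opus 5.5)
We prove the statement for $Z_n$ by the method of moments, working directly from the closed form of $\CP(t,x)$ given in the preceding proposition. The same approach via characteristic functions, as in the proof of Theorem~\ref{theo:Yn_valueBn1}, would also work. The limit $Z$ is uniform on $[0,1]$, so it is determined by its moments, and by the Fr\'echet--Shohat theorem it suffices to show that
\[
\E\big((Z_n)_r\big) \sim \frac{(2n)^r}{r+1}\qquad\text{for every fixed } r.
\]
Factorial moments and raw moments agree to leading order, so this gives $\E(Z_n^r)\sim (2n)^r/(r+1)$, which is what we need.

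\textbf{Step 1 (normalisation).} First compute $[t^n]\CP(t,1)$. At $x=1$ the bracket in $\CP$ has a removable singularity. Taking the limit, or simply summing the recurrence~\eqref{eq:recnumy10n} over $i$, gives $\CP(t,1)=\sum_n c_{n,1}t^n/n!$. In terms of the paper's shifted generating function this is $\CP(t,1)=C_1'(t)$, and the proposition on $C_k$ gives
\[
C_1(t)=(1-2t)^{-3/2}-(1-2t)^{-1}.
\]
The dominant singular term is then $(1-2t)^{-5/2}$, so $[t^n]\CP(t,1)\sim \kappa\, 2^n n^{3/2}$ for an explicit constant $\kappa$.

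\textbf{Step 2 (derivatives at $x=1$).} The factor $\frac{x^2}{1-x}$ makes direct differentiation awkward. Instead, write the bracket as $N(t,x)/(1-x)$, where
\[
N(t,x)=\frac{1-x-x\sqrt{1-2t}}{1-2x+2tx^2}-\frac1{\sqrt{1-2tx^2}}
\]
vanishes at $x=1$. Then $\partial_x^r \CP(t,1)$ can be read off from the Taylor expansion of $N$ in $s=x-1$, each coefficient being divided by $s$. We only track the most singular part near $t=1/2$. Put $\delta=1-2t$. The denominator $1-2x+2tx^2$ equals $-\delta+O(s)$, while $\sqrt{1-2tx^2}=\sqrt{\delta-2s+O(s\delta,s^2)}$. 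This produces exactly the coalescence of singularities described in the abstract: expanded in $s$, the $r$-th coefficient behaves like $\delta^{-r-\frac52}\cdot(\text{const})$, since the contributions of both terms sum to a homogeneous expression in $(\delta,s)$ of the right degree.

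To organise the computation, substitute $s=\delta\sigma$ and note that the leading part of $N/s$ becomes $\delta^{-5/2}\Phi(\sigma)$ for a scaling function $\Phi$ built from $(1+\lambda\sigma)^{-1}$ and $(1-2\sigma)^{-1/2}$. The coefficient of $\sigma^r$ in $\Phi$, multiplied by $r!$, gives the leading singular term of $\partial_x^r\CP(t,1)$ as $r!\,[\sigma^r]\Phi\cdot\delta^{-r-5/2}$. Transfer theorems then give
\[
[t^n]\partial_x^r\CP(t,1)\sim r!\,[\sigma^r]\Phi\cdot 2^n\frac{n^{r+3/2}}{\Gamma(r+5/2)}.
\]

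\textbf{Step 3 (identify the moments).} Divide by the result of Step 1 and check that
\[
r!\,[\sigma^r]\Phi\cdot\frac{\Gamma(5/2)}{\kappa\,\Gamma(r+5/2)}=\frac{2^r}{r+1}.
\]
Equivalently, the generating function of the conjectured moments, $\sum_r \frac{(2w)^r}{(r+1)!}=\frac{e^{2w}-1}{2w}$, must match the Laplace-type transform of $\Phi$ obtained via the Hankel representation of $1/\Gamma$. This is a finite identity that can be checked by a Beta-integral computation.

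I expect Step 2 to be the main obstacle. The cancellation at $x=1$ removes the naive leading terms: each individual term is more singular than $\delta^{-r-5/2}$, and only their combination has the right order. Extracting the correct homogeneous scaling function $\Phi$ requires keeping the square root $\sqrt{\delta-2s}$ intact rather than expanding it, and then verifying that no lower-order mixed terms (such as $s\delta$ corrections) contribute at leading order.

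As a fallback, one can avoid generating functions altogether and work with the recurrence~\eqref{eq:recnumy10n}. Set $P_n(x)=\sum_i \cp_{n,i}x^i$. Applying factorial-moment operators gives recursions for $\sum_i (i)_r\cp_{n,i}$ of the form $S_{n,r}=(2n-1-r)S_{n-1,r}+(\dots)S_{n-1,r-1}$ plus a boundary term. Solving these by induction on $r$, with Stirling-type asymptotics for products of $(2j-1-r)$, should give the same $(2n)^r/(r+1)$ law. This route also confirms the mean $n-\frac{\sqrt{\pi n}}2+O(1)$ stated above.
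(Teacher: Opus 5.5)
\textbf{Verdict: genuine gap.} Your strategy is a legitimate alternative to the route the paper points to. You use factorial moments from $\partial_x^r V(t,1)$, singularity analysis at $t=1/2$, and then Fr\'echet--Shohat; the paper instead uses characteristic functions and a Hankel contour, as for Theorem~\ref{theo:Yn_valueBn1}. Your route would even give convergence of moments. However, the quantitative content of Steps~1--2 is wrong, and Step~3, where the uniform law is actually identified, is never carried out. As written, this is not a proof.

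\textbf{Step 1.} $V(t,1)=C_1(t)$, not $C_1'(t)$: for $k=1$ the shifted generating function is $\sum_n c_{n,1}z^n/n!$. Directly from the closed form, $V(t,1)=(1-2t)^{-3/2}-(1-2t)^{-1}$, i.e.\ $c_{n,1}=(2n+1)!!-(2n)!!$. Hence $[t^n]V(t,1)\sim 2^n n^{1/2}/\Gamma(3/2)$, not $\kappa\,2^n n^{3/2}$.

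\textbf{Step 2.} The exponent is off by one for the same reason, and the mechanism you describe does not occur: there is no leading-order cancellation and no $(1+\lambda\sigma)^{-1}$ factor. Indeed $1-2x+2tx^2=(1-x)^2-x^2\delta=(1-x-x\sqrt\delta)(1-x+x\sqrt\delta)$, so the first term of $N$ is simply $1/(1-x+x\sqrt\delta)$. Its $s^j$-coefficient is $(1-\sqrt\delta)^j\delta^{-(j+1)/2}$, and its natural scale is $s\asymp\sqrt\delta$. The $s^j$-coefficient of $(1-2tx^2)^{-1/2}=(\delta(1+s)^2-2s-s^2)^{-1/2}$ is $\binom{2j}{j}2^{-j}\delta^{-j-1/2}(1+O(\delta))$. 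So both terms are \emph{less} singular than you claim, and the square-root term dominates for $j\ge 1$. The only cancellation is the trivial one at order $s^0$, which makes $x=1$ removable.

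\textbf{Correct version.} With $s=\delta\sigma$, the leading part of $V$ is $\delta^{-3/2}\Phi(\sigma)$ with $\Phi(\sigma)=\bigl((1-2\sigma)^{-1/2}-1\bigr)/\sigma$, and
\[
\partial_x^r V(t,1)=\frac{(2r+1)!!}{r+1}\,(1-2t)^{-r-3/2}+O\bigl(|1-2t|^{-r-1}\bigr).
\]
This is a finite Laurent polynomial in $\sqrt{1-2t}$, so coefficient extraction is exact; for instance $\partial_x V(t,1)=\frac32\bigl((1-2t)^{-5/2}-(1-2t)^{-3/2}\bigr)$. Hence
\[
\E((Z_n)_r)\sim\frac{(2r+1)!!}{r+1}\cdot\frac{\Gamma(3/2)}{\Gamma(r+3/2)}\,n^r=\frac{(2n)^r}{r+1},
\]
since $\Gamma(r+3/2)/\Gamma(3/2)=(2r+1)!!/2^r$. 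No Beta integral is needed. The identity you wrote in Step~3 with $\kappa$ and $\Gamma(r+5/2)$ cannot be checked, because its inputs are wrong.

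\textbf{Comparison with the paper's route.} The paper's route uses the same local object. Extracting $[t^n]$ of $\frac1s\bigl((\delta-2s)^{-1/2}-\delta^{-1/2}\bigr)$ at $x=e^{iu/(2n)}$ and normalizing gives $\phi(u)=(e^{iu}-1)/(iu)$. Your moment route, once corrected, is simpler here precisely because every $\partial_x^r V(t,1)$ is exact.

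\textbf{Fallback.} This also needs fixing. The recurrence gives
\[
P_n(x)=2nP_{n-1}(x)+x(x-1)P_{n-1}'(x)+(2n-1)!!\,x^{2n+1},
\]
hence
\[
S_{n,r}=(2n+r)S_{n-1,r}+r(r-1)S_{n-1,r-1}+(2n-1)!!\,(2n+1)_r.
\]
The coefficient is $2n+r$, not $2n-1-r$. For $r=1$ this solves to $S_{n,1}=n(2n+1)!!$. Therefore
\[
\E(Z_n)=\frac{n(2n+1)!!}{(2n+1)!!-(2n)!!}=n+\frac{\sqrt{\pi n}}{2}+O(1),
\]
which does not confirm the minus sign in the mean stated in the paper.
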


\section{Conclusion and Outlook}
\label{sec:conclusion}
We have developed a combinatorial and analytic framework translating the Pons--Batle identity into a system of algebraic generating functions derived from decorated Dyck paths and Young tableaux with walls. Our approach not only verifies the identity for finite $k \le 250$ but also uncovers the probabilistic behavior of tree-child networks, as illustrated by the exact Beta and Uniform limit laws for $k=1$. These results reveal a general mechanism in which the coalescence of singular terms at the dominant singularity produces non-analytic transitions leading to Beta-type limit distributions.

Looking forward, several directions emerge naturally. First, we aim to abstract the mechanism behind the Beta distributions and formulate general conditions on algebraic or D-finite generating functions that systematically produce $\mathrm{Beta}(\alpha, \beta)$ limit laws. Second, we plan to investigate the joint distribution of structural parameters and their correlations, and to extend our probabilistic analysis to arbitrary fixed $k>1$, including the sum of values in the bottom row of $k$ cells, which could provide deeper insights into the structure of Pons--Batle word classes. Finally, the flexibility of the decorated Dyck path model suggests its applicability beyond standard tree-child networks; we intend to extend this framework to $d$-combining networks, offering a unified analytic approach to their exact enumeration and asymptotic properties.

\bibliography{bibliography}

\clearpage
\appendix

\section{Proof of Proposition~\ref{prop:BkODE}}

First, we define two operators:
\[D=\frac{d}{dz} \qquad \text{ and } \qquad  T=(1-2z)D, \]
which act linearly on formal power series. 
Now, Equation~\eqref{eq:odeBbar} can  be  rewritten  into
\begin{align}
\label{eq:barbkop}
(T-(3k-1)){C}_k(z)=D^2{C}_{k-1}(z).
\end{align}

From the definitions it follows directly that $D^2T=TD^2-4D^2=(T-4)D^2$.
Let $P(X)$ be any polynomial in the formal variable~$X$. Then, inductively, it holds that
\begin{align}
\label{eq:opswitch}
D^2 P(T)=P(T-4) D^2.    
\end{align}
Now, we turn to the claimed factored differential equation~\eqref{eq:barbkopgf} and we start with the case $k=0$.
From~\eqref{eq:odeBbar} we see that
\[(1-2z) {C}_0'(z)+{C}_0(z)=1, \] whose second derivative is 
\[(1-2z) {C}_0'''(z)-3{C}''_0(z)=(T-3){C}''_0(z)=0. \]
When $k=1$, we get from~\eqref{eq:odeBbar} that 
\begin{align*}
(1-2z) {C}_1'(z)-2{C}_1(z)=(T-2){C}_1(z)={C}''_0(z).
\end{align*}
After multiplying by $(T-3)$ from the left we get
\begin{align*}
(T-3)(T-2){C}_1(z)
=(T-3){C}''_0(z)=0.    
\end{align*}
This proves the claim for $k=1$.
Now we use induction to prove \eqref{eq:barbkopgf} for arbitrary $k>1$.
Assume that the claim holds for $k-1$: 
\[\prod_{j=3k-4}^{4k-5}(T-j){C}_{k-1}(z)=Q_{k-1}(T) \,{C}_{k-1}(z)=0, \]
where $Q_{k-1}(T)$ is a polynomial in $T$. 
The main idea now is to repeatedly use the relation~\eqref{eq:opswitch} together with the operator identity~\eqref{eq:barbkop} as follows
\begin{align*}
0&=D^2 0=D^2((Q_{k-1}(X)\big|_{X=T}) \,{C}_{k-1}(z))=D^2(Q_{k-1}(X)\big|_{X=T}) \,{C}_{k-1}(z) \\&=(Q_{k-1}(X) \,\big|_{X=T-4}) \,(D^2 {C}_{k-1})=(Q_{k-1}(X) \,\big|_{X=T-4}) \, ((T-(3k-1)){C}_k) \\&=\prod_{j=3k}^{4k-1}(T-(3k-1)){C}_k =\prod_{j=3k-1}^{4k-1}(T-j) {C}_k \\&=(Q_{k}(X)\big|_{X=T}) {C}_k,     
\end{align*}
Thus we get (\ref{eq:barbkopgf}).

For any $\phi_j$ such that $\phi_j=(1-2z)^\alpha$, we have $T\phi_j=-2\alpha\phi_j$, thus $\phi_j$ is the eigenfunction of the operator $T$. Note that $ {C}_k(z)$ can be written as the linear combination  of $\phi_j$ ($3k-1 \leq j \leq 4k-1$) such that $\phi_j$ is the  solution of 
$(T-j) \phi_j=(-2\alpha-j)\phi_j=0,$     which implies that  \[ {C}_k(z)=\sum_{j=3k-1}^{4k-1}c'_{j,k} (1-2z)^{-\frac{j}{2}}. \] 
Substitute the equation above in \eqref{eq:barbkop}. For the left side, we have
\begin{align*}
(T-(3k-1))\bar{B_k}
  &= (T-(3k-1))\sum_{j=3k-1}^{4k-1}c'_{j,k} (1-2z)^{-\tfrac{j}{2}} \\
  &= \sum_{j=3k-1}^{4k-1} (j-(3k-1))c'_{j,k} (1-2z)^{-\tfrac{j}{2}},
\end{align*}
and for the right side we have
\begin{align*}
D^2B_{k-1}
  &= D^2\!\left(\sum_{n=3k-4}^{4k-5}c'_{n,k-1} (1-2z)^{-\tfrac{n}{2}}\right) \\
  &= \sum_{n=3k-4}^{4k-5} n(n+2)c'_{n,k-1} (1-2z)^{-\tfrac{n+4}{2}} \\
  &= \sum_{n=3k}^{4k-1} (n-4)(n-2)c'_{n-4,k-1} (1-2z)^{-\tfrac{n}{2}}.
\end{align*}
Thus, for $3k \leq j \leq 4k-1$, we have
\[
c'_{j,k} = \frac{(j-2)(j-4)}{j-(3k-1)}c'_{j-4,k-1}.
\]

For $j=3k-1$, the coefficient in front of $c'_{3k-1,k}$ in $(T-(3k-1))\bar{B_k}$ vanishes, so $c'_{3k-1,k}$ cannot be determined from this relation alone. However, since
\[
{C}_k(0)=\sum_{j=3k-1}^{4k-1}c'_{j,k}=0,
\]
we know that $c'_{3k-1,k} = - \displaystyle \sum_{j=3k}^{4k-1} c'_{j,k}.$

For the first terms when $k=1$, since \[(T-2)(T-3) {C}_1(z)=0,\] we have \[B_1(z)=c'_{2,1}(1-2z)^ {-1}+c'_{3,1}(1-2z)^{-\frac{3}{2}}.\] Furthermore, from \eqref{eq:barbkop},
\begin{align*}
(T-2){C}_1(z)&=(T-2)c'_{2,1}(1-2z)^{-1} + (T-2)c'_{3,1}(1-2z)^{-\tfrac{3}{2}}\\&=c'_{2,1}(T-2)(1-2z)^{-1}+ c'_{3,1} (T-2)(1-2z)^{-\tfrac{3}{2}} \\&
  = c'_{2,1}(2-2)(1-2z)^{-1} + c'_{3,1}(3-2)(1-2z)^{-\tfrac{3}{2}} \\
  &= c'_{3,1}(1-2z)^{-\tfrac{3}{2}}.
\end{align*}

On the other hand,
\[
D^2{C}_0(z) = (1-2z)^{-\tfrac{3}{2}}.
\]
Comparing both sides gives \(c'_{3,1}=1\). Using ${C}_1(0)=0$, we obtain $c'_{2,1}=-1$ as well. Finally, substitute $j$ by $i+3k-1$, we arrive at~(\ref{eq:barbkgf}).
\qed

\pagebreak

\section{Some examples}\label{apb}

\begin{figure}[H]
    \centering
    \includegraphics[scale=1.7]{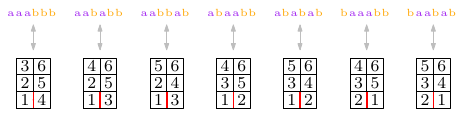}
    \caption{All words of the set $\mathcal{A}_2$ and their corresponding Young tableaux in $\mathcal{A}^*_2$.}
    \label{a2bijection}
\end{figure}

\begin{figure}[H]
    \centering
    \includegraphics[scale=1.7]{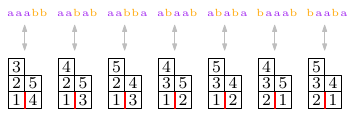}
    \caption{All words of the set $\mathcal{C}_{2,1}$ and their corresponding Young tableaux in $\mathcal{C}^*_{2,1}$.}
    \label{c21bijection}
\end{figure}

\begin{figure}[H]
    \centering
    \includegraphics[scale=1.7]{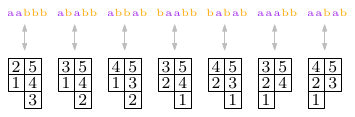}
    \caption{All words of the set $\mathcal{B}_{2,1}$ and their corresponding Young tableaux in $\mathcal{B}^*_{2,1}$.}
    \label{b21bijection}
\end{figure}

\begin{figure}[h]
    \centering
    \includegraphics[scale=1.1]{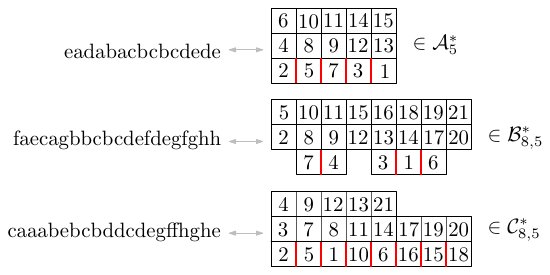}
    \caption{A word in $\mathcal{A}_5$ and its corresponding Young tableau in $\mathcal{A}^*_5$ under the bijection; a word in $\mathcal{B}_{8,5}$ and its corresponding Young tableau in $\mathcal{B}^*_{8,5}$; and a word in $\mathcal{C}_{8,5}$ and its corresponding Young tableau in $\mathcal{C}^*_{8,5}$.}
    \label{abcbijection}
\end{figure}

\end{document}